\newcommand{\rmnum}[1]{\romannumeral #1}
\newtheorem{algo}{Algorithm}
\newtheorem{rem}{Remark}
\title{Lagrangian-based methods in convex optimization: prediction-correction frameworks with non-ergodic  convergence rates\thanks{This work was supported by the National Natural Science Foundation of China under Grant 12171021 and
the Fundamental Research Funds for the Central Universities.}}
\author{Tao Zhang\footnotemark[2] \and Yong Xia\footnotemark[2] \and Shiru Li\footnotemark[2]}
\begin{document}
\maketitle

\renewcommand{\thefootnote}{\fnsymbol{footnote}}
\footnotetext[2]{School of Mathematical Sciences, Beihang University, Beijing, 100191, P. R. China  ({\tt (T. Zhang) shuxuekuangwu@buaa.edu.cn};  {\tt (Y. Xia) yxia@buaa.edu.cn}; {\tt (S. Li, Corresponding author) lishiru@buaa.edu.cn}).}

\begin{abstract}
Lagrangian-based methods are classical methods for solving convex optimization problems with equality constraints. We present novel prediction-correction frameworks for such methods and their variants, which can achieve $O(1/k)$ non-ergodic convergence rates for general  convex optimization and $O(1/k^2)$ non-ergodic convergence rates  under the assumption that the objective function is strongly convex or gradient Lipschitz continuous. We give two approaches ($updating~multiplier~once$ $or~twice$) to design algorithms satisfying the presented prediction-correction frameworks. As applications,  we establish  non-ergodic convergence rates for some well-known Lagrangian-based methods  (esp., the ADMM type methods and the multi-block ADMM type methods).
\end{abstract}

\begin{keywords}
Lagrangian-based methods, Convex optimization, ADMM,   Non-ergodic convergence rate.	
	\end{keywords}

\begin{AMS}
47H09, 47H10, 90C25, 90C30	
	\end{AMS}

\pagestyle{myheadings}
\thispagestyle{plain}
\markboth{T ZHANG,  Y XIA,  AND S LI}{NON-ERGODIC CONVERGENCE RATES}

\section{Introduction}
The convex programming problem with linear equality constraints is a widely applied class of optimization problems. The mathematical  formulation reads as
\begin{equation}\label{P1}\tag{P1}
\min\{ f(x):~ Ax=b\},
\end{equation}
where $f:\mathbb{R}^n\rightarrow\mathbb{R}$ is closed, proper, convex, but not necessarily smooth, $A\in\mathbb{R}^{l\times n}$, and $b\in\mathbb{R}^{l}$. The feasible region of \eqref{P1} is denoted by $\Omega$. An important special case of \eqref{P1} is
the following two-block separable convex optimization problem:
\begin{equation}\label{P2}\tag{P2}
\min \left\{f(x)=f_1(x_1)+f_2(x_2):~ (Ax:=)A_1x_1+A_2x_2=b\right\},
\end{equation}
where $A_1\in\mathbb{R}^{l\times n_1}$, $A_2\in\mathbb{R}^{l\times n_2}$, $n_1+n_2=n$, $b\in\mathbb{R}^{l}$, $f_1:\mathbb{R}^{n_1}\rightarrow\mathbb{R}$ and $f_2:\mathbb{R}^{n_2}\rightarrow\mathbb{R}$ are proper, closed and convex. Then,
a natural extension of \eqref{P2} leads to the multi-block separable convex optimization problem:
\begin{equation}\label{P3}\tag{P3}
\min\left\{f(x)=\sum_{i=1}^{m}f_i(x_i):~ (Ax:=)\sum_{i=1}^{m}A_ix_i=b\right\},
\end{equation}
where $m\geq2$, $f_i:\mathbb{R}^{n_i}\rightarrow\mathbb{R}$ is closed proper convex for $i\in[1,2,\cdots,m]$, $A_i\in\mathbb{R}^{l\times n_i}$, $\sum_{i=1}^{m}n_i=n$ and $b\in\mathbb{R}^{l}$,  which remains a special case of \eqref{P1}.

The augmented Lagrangian method (ALM) \cite{hestenes1969multiplier,powell1969method} is a fundamental and efficient approach for solving problem \eqref{P1}. Several improved variants of ALM have been developed that are highly effective. For instance, the proximal ALM, introduced by Rockafeller \cite{rockafellar1976augmented,rockafellar1976monotone}, and the linearized ALM, which employs special proximal terms, can easily solve the subproblem of ALM.
Another well-known Lagrangian-based method, the alternating direction method of multipliers (ADMM) \cite{gabay1976dual,glowinski1975}, efficiently solves problem \eqref{P2} by taking advantage of its special structure. ADMM is a very popular method, with many improved variants including the proximal ADMM \cite{eckstein1994some} and the linearized ADMM \cite{yang2013linearized} with special proximal terms.
Directly applying the idea of ADMM to solve problem \eqref{P3} does not lead to convergence, but several ADMM type methods \cite{he2012alternating,he2017convergence} have been presented to solve problem \eqref{P3} effectively.

The purpose of this paper is to study  the non-ergodic convergence rates of the Lagrangian-based methods for solving \eqref{P1}, \eqref{P2} and \eqref{P3}, respectively.

He and Yuan \cite{2012On} presented a unified prediction-correction framework for simplifying the convergence and convergence rate proofs of Lagrangian-type methods. Algorithms that satisfy this framework demonstrate $O(1/K)$\footnote{Throughout this paper,
 we use $k$ and $K$ in describing the non-ergodic and ergodic convergence rates, respectively.} convergence rates of the primal-dual gap in both ergodic and non-ergodic senses \cite{he2018my,he2014strictly,he2016convergence,he2020optimally,he2015splitting}. In particular, the works of Chambolle and Pock \cite{chambolle2011first,chambolle2016ergodic}, Monteiro and Svaiter \cite{monteiro2013iteration}, Shefi and Teboulle \cite{shefi2014rate}, and He and Yuan \cite{he20121} show that the $O(1/K)$ convergence rate can be achieved in the ergodic sense.
 Furthermore, Chambolle and Pock \cite{chambolle2011first,chambolle2016ergodic} presented the primal-dual hybrid gradient method with $O(1/K^2)$ ergodic convergence rate for solving the min-max problem under the  strongly convex assumption. The ADMM presented by Xu  \cite{xu2017accelerated} and the linearized ADMM due to Ouyang et al.  \cite{ouyang2015accelerated} enjoy the same convergence rate under the same assumption of strongly convex. Tian and Yuan \cite{2016An} established $O(1/K^2)$ ergodic convergence rate of ADMM when only the gradient is assumed to be Lipschitz continuous.
Recently, our work \cite{zhang2022faster} present a generalized
prediction-correction framework to establish $O(1/K^2)$ ergodic convergence rates for some Lagrangian-based
methods.

Historically, the first accelerated gradient method with $O(1/k^2)$
convergence rate for unconstrained convex optimization was due to Nesterov \cite{nesterov1983method}. It  has been further generalized  to composite convex optimization with the simple proximal operator \cite{beck2009fast,tseng2010}.
He and Yuan \cite{0On} introduced Nesterov's momentum approach in  dual updating and obtained $O(1/k^2)$ non-ergodic convergence rate (in view of the dual objective residual) for solving \eqref{P1}. Goldstein et al. \cite{2014fast} extended this approach to solve \eqref{P2} by a fast version of ADMM under some strict conditions. Tran-Dinh and Zhu \cite{2020NON} and Valkonen \cite{valkonen2020inertial} studied the acceleration algorithms for the special case of \eqref{P2} with $A_2=-I$ and $b=0$.
Recently, Sabach and Teboulle \cite{sabach2022faster} presented a class of Lagrangian-based methods with faster convergence rates by  introducing the so-called $nice ~primal ~algorithmic~ map$. All the above mentioned works \cite{sabach2022faster,2020NON,valkonen2020inertial} established $O(1/k)$  and $O(1/k^2)$ non-ergodic convergence rates under the strongly convex assumption.  These convergence rates can also be established by the penalty  methods  \cite{li2017convergence,tran2019proximal} or the smoothing technique \cite{train}.

{\bf Contributions.}
We present prediction-correction frameworks to achieve $O(1/k)$ non-ergodic convergence rates for the general convex optimization and $O(1/k^2) $ non-ergodic convergence rates under the assumption that the objective function is either strongly convex or gradient Lipschitz continuous. The basic idea is to combine minimizing  the primal variables  of the augmented Lagrangian function by the accelerated gradient methods due to Nesterov  \cite{nesterov1983method}  with the dual updating in the prediction-correction framework.
All the non-ergodic convergence rates are built on  the convergence conditions \eqref{V9}-\eqref{V12} (Section \ref{S2}). These convergence conditions are precisely used to establish the ergodic convergence rate \cite{zhang2022faster}.
Based on our newly established prediction-correction frameworks,
we give two classes of Lagrangian-based methods named $correcting~multiplier$  $twice$ and $correcting~multiplier$ $once$ to achieve non-ergodic convergence rates for solving \eqref{P1}, \eqref{P2} and \eqref{P3}. In particular,
to the best of our knowledge, it is the first time to establish non-ergodic convergence rates for solving \eqref{P3} by the multi-block ADMM type methods. As a by-product,
we show that for solving  the strongly convex  case of \eqref{P1},  the best   residue in the first $k$ iterations  ($\min_{i}\|x^i-x^{i+1}\|^2$ ($i=0,\dots,k$)) converges at the rate of $O(1/k^4)$ (see Remark \ref{r4}). This is a novel observation compared with the ergodic case \cite{zhang2022faster}.

{\bf Outline.}
The rest of this paper is as follows. In Section 2, we present our previous generalized prediction-correction framework, which includes ergodic convergence rates and convergence conditions. Section 3 discusses the prediction-correction framework with $O(1/k)$ non-ergodic convergence rates. In Section 4, we present the prediction-correction framework with $O(1/k^2)$ non-ergodic convergence rates. Section 5 provides several algorithms for solving \eqref{P2} and \eqref{P3}.

{\bf Notation.} Let $x=(x_1,\cdots,x_m)\in \mathbb{R}^n$ be a column vector and $x_i$ be the $i$-th component  or subvector of $x$. The inner product of $x,y$ is denoted by $\langle x,y\rangle=x^Ty$.
$I_n\in\mathbb{R}^{n\times n}$ denotes the  identity matrix.
Define $\|x\|_D^2= x^TDx$  no matter whether $D$ is positive semidefinite. The Euclidean norm $\|x\|$ is $\|x\|_{I_n}$.
$\sigma_{\max}(D)$ and $\sigma_{\min}(D)$ denote the maximal  and minimal eigenvalues of $D$, respectively.
 $\partial f(x)$ represents the subdifferential  of the convex function $f(x)$.
$\nabla f(x)$ represents  the  gradient of the smooth function $f(x)$.
The following two definitions are standard.
\begin{definition}
	$f:\mathbb{R}^n\rightarrow\mathbb{R}$  is  $\sigma~(\geq0)$-strongly convex if there is  a constant $\sigma\geq0$ such that$$f(y)\geq f(x)+\langle f'(x),y-x\rangle+\frac{\sigma}{2}\|y-x\|^2,~f'(x)\in\partial f(x),~\forall x,y\in\mathbb{R}^n.
	$$
\end{definition}
\begin{definition}
	$f:\mathbb{R}^n\rightarrow\mathbb{R}$  is  $L$-gradient Lipschitz continuous  if $f$ is differentiable and there is a constant $L>0$ such that
	\begin{equation}
	\begin{aligned}
	&
	f(y)\leq f(x)+\langle \nabla f(x),y-x\rangle+\frac{L}{2}\|y-x\|^2,~\forall x,y\in\mathbb{R}^n.\\
	\Longleftrightarrow&
	f(y)\geq f(x)+\langle \nabla f(x),y-x\rangle+\frac{1}{2L}\|\nabla f(x)-\nabla f(y)\|^2,~ \forall x,y\in\mathbb{R}^n.
	\end{aligned}
	\end{equation}
\end{definition}
\section{Preparations}\label{S2}
We  write in the following the Lagrangian function of \eqref{P1}-\eqref{P3}:
$$L(x,\lambda)=f(x)-\lambda^T(Ax-b),$$
where $\lambda$ is the Lagrange multiplier. We call $(x^*,\lambda^*)$ a saddle point of $L(x,\lambda)$ if it holds that
$$
L(x^*,\lambda)\leq L(x^*,\lambda^*)\leq L(x,\lambda^*),~\forall x\in \mathbb{R}^{n},~\forall\lambda\in\mathbb{R}^{l}.
$$
Throughout this paper, for \eqref{P1}, $x\in\mathbb{R}^{n},x'\in\Omega~{\rm and}~\lambda\in\mathbb{R}^{l}$, we define
\begin{equation}
\label{V2}u=v=\begin{pmatrix}
x\\ \lambda
\end{pmatrix},u'=v'=\begin{pmatrix}
x'\\ \lambda
\end{pmatrix},u^*=v^*=\begin{pmatrix}
x^*\\ \lambda^*
\end{pmatrix}, F(u)=\begin{pmatrix}
-A^T\lambda\\Ax-b
\end{pmatrix}.
\end{equation}
For \eqref{P2}, $x_i\in\mathbb{R}^{n_i} $ ($i=1,2$) and $\lambda\in\mathbb{R}^{l}$, we define
\begin{equation}
\label{V18}
\begin{aligned}
&u=\begin{pmatrix}
x_1\\x_2\\\lambda
\end{pmatrix}, u'=\begin{pmatrix}
x_1'\\x_2'\\\lambda
\end{pmatrix},u^*=\begin{pmatrix}
x_1^*\\x_2^*\\\lambda^*
\end{pmatrix},~F(u)=\begin{pmatrix}
-A_1^T\lambda\\-A^T_2\lambda\\Ax-b
\end{pmatrix},\\&
v=\begin{pmatrix}
x_2\\\lambda
\end{pmatrix},v'=\begin{pmatrix}
x_2'\\\lambda
\end{pmatrix},v^*=\begin{pmatrix}
x_2^*\\\lambda^*
\end{pmatrix}, x'=\begin{pmatrix}
x_1'\\x_2'
\end{pmatrix}\in\Omega.
\end{aligned}
\end{equation}
For \eqref{P3},  $x_i\in\mathbb{R}^{n_i} $ ($i=1,\dots,m$) and $\lambda\in\mathbb{R}^{l}$, we define
\begin{equation}
\begin{aligned}
&\label{D13}
u=\begin{pmatrix}
x_1\\ \vdots\\x_m\\\lambda
\end{pmatrix},u'=\begin{pmatrix}
x_1'\\ \vdots\\x_m'\\\lambda
\end{pmatrix},u^*=\begin{pmatrix}
x_1^*\\ \vdots\\x_m^*\\\lambda^*
\end{pmatrix},F(u)=\begin{pmatrix}
-A_1^T\lambda\\\vdots\\-A_m^T\lambda\\Ax-b
\end{pmatrix},\\&v=\begin{pmatrix}
A_2x_2\\ \vdots\\A_mx_m\\\lambda
\end{pmatrix},v'=\begin{pmatrix}
A_2x_2'\\ \vdots\\A_mx_m'\\\lambda
\end{pmatrix},v^*=\begin{pmatrix}
A_2x_2^*\\ \vdots\\A_mx_m^*\\\lambda^*
\end{pmatrix},
x'=\begin{pmatrix}
x_1'\\ \vdots\\x_m'
\end{pmatrix}\in\Omega.
\end{aligned}
\end{equation}
We will always  use the above definitions of $u,~v$  and  $x',~u',~v'$  for \eqref{P1}-\eqref{P3}, unless explicitly stated otherwise.
As shown in \cite{he2018my,he2014strictly,he2016convergence,2012On}, the saddle point $(x^*,\lambda^*)$ can be alternatively characterized as a solution point of the
following variational inequality (VI):
\begin{equation}
\label{V1}
f(x)-f(x^*)+(u-u^*)^TF(u^*)\geq0,~\forall u\in \mathbb{R}^{n+l}.
\end{equation}

\subsection{Generalized prediction-correction framework with ergodic convergence rates}\label{S2.2}
To improve the ergodic convergence rate for solving \eqref{V1}, our previous work \cite{zhang2022faster} established a generalized framework with dynamically updated matrices $Q^k$ and $M^k$.
\begin{framed}
	\noindent{\bf[Generalized prediction step.]} With a given $v^k$, find $\widetilde{u}^k$ such that
	\begin{equation}\label{V7} f(x)-f(\widetilde{x}^k)+(u-\widetilde{u}^k)^TF(\widetilde{u}^k)\geq(v-\widetilde{v}^k)^TQ^k(v^k-\widetilde{v}^k)+\frac{\sigma}{2}\|z^k-z\|^2_R,
	~\forall u,
	\end{equation}
	where $\sigma\geq0$, $R\succeq 0$ and $(Q^k)^T +Q^k\succeq0$ (noting that $Q^k$ is not necessarily symmetric).
	
	\noindent {\bf [Generalized correction step.]} Update $v^{k+1}$ by
	\begin{equation}\label{V8}
	v^{k+1}=v^k-M^k(v^k-\widetilde{v}^k).
	\end{equation}
\end{framed}

If $\sigma=0$, $Q^k$ and  $M^k$ are fixed, it is exactly the framework presented by He and Yuan  \cite{2012On}.  The new variable $z$ will be  set as $x$, $x_i$,  $\nabla f(x_i)$ and so on.
Convergence is built under the following conditions.

\begin{framed}
	\noindent{\bf[Generalized convergence condition.]}
	For the matrices $Q^k$ and $M^k$ used in \eqref{V7} and \eqref{V8}, respectively, there exists a matrix $H^k$ such that
	\begin{equation}
	Q^k=H^kM^k,\label{V9}\tag{CC1}
	\end{equation}
	\begin{equation}
	\label{V10}\tag{CC2}G^k:=(Q^k)^T +Q^k-(M^k)^TH^kM^k.
	\end{equation}
\end{framed}

\begin{framed}
	\noindent {\bf[Additional convergence condition.]}
	For $r^k>0$ and $H_0^k\succeq0$, it holds that
	\begin{equation}\label{V12}\tag{CC3}
	\begin{aligned}
	&r^k\left(\|v^{k+1}-v' \|^2_{H^k}+\sigma\|z^k-z'\|^2_R-\|v^k-v' \|^2_{H^k}+\|v^k-\widetilde{v}^k\|^2_{G^k}\right)\\& \geq\|v^{k+1}-v'\|^2_{H_0^{k+1}}-\|v^k-v' \|^2_{H_0^k}+\varTheta^{k+1}-\varTheta^k,~\varTheta^k\geq0.
	\end{aligned}
	\end{equation}
\end{framed}

\begin{theorem}[\cite{zhang2022faster}] \label{main}
	Under the convergence conditions \eqref{V9}-\eqref{V12}, for the generalized prediction-correction framework \eqref{V7}-\eqref{V8}, we have,
	$$
	f(\widetilde{X}^K)-f(x')-\lambda^T(A\widetilde{X}^K-b) \leq O\left(1 \Big/ \sum_{k=0}^{K}r^k \right),
	$$
	where $\widetilde{X}^K=(\sum_{k=0}^{K}r^k\widetilde{x}^k)/(\sum_{k=0}^{K}r^k)$. In particular, setting $r^k=O(k)$ achieves  $O(1/K^2)$ convergence rate.
\end{theorem}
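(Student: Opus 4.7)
The plan is to chain the prediction inequality \eqref{V7} with the correction update \eqref{V8}, rewrite the resulting cross term as a (quasi-)telescoping sequence of squared distances using \eqref{V9}--\eqref{V10}, and then absorb the iteration-dependence of $H^k$ through \eqref{V12} into a genuine telescoping sum. First I would specialize \eqref{V7} at $u=u'$, so that $x=x'\in\Omega$ while $\lambda$ is free. The operator $F$ defined in \eqref{V2}, \eqref{V18} and \eqref{D13} is affine with skew-symmetric linear part, hence $(u'-\widetilde{u}^k)^T F(\widetilde{u}^k)=(u'-\widetilde{u}^k)^T F(u')=\lambda^T(A\widetilde{x}^k-b)$, using $Ax'=b$. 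Substituting into \eqref{V7} yields
\begin{equation*}
f(\widetilde{x}^k)-f(x')-\lambda^T(A\widetilde{x}^k-b)\le -(v'-\widetilde{v}^k)^T Q^k(v^k-\widetilde{v}^k)-\frac{\sigma}{2}\|z^k-z'\|_R^2.
\end{equation*}

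Next I would recast the cross term. By \eqref{V9} and \eqref{V8}, $Q^k(v^k-\widetilde{v}^k)=H^k M^k(v^k-\widetilde{v}^k)=H^k(v^k-v^{k+1})$. Splitting $v'-\widetilde{v}^k=(v'-v^k)+(v^k-\widetilde{v}^k)$ and applying the polarization identity $2(v'-v^k)^T H^k(v^k-v^{k+1})=\|v^{k+1}-v'\|_{H^k}^2-\|v^k-v'\|_{H^k}^2-\|v^k-v^{k+1}\|_{H^k}^2$, together with the identity $2(v^k-\widetilde{v}^k)^T H^k(v^k-v^{k+1})=(v^k-\widetilde{v}^k)^T((Q^k)^T+Q^k)(v^k-\widetilde{v}^k)$ and the substitution $(Q^k)^T+Q^k=G^k+(M^k)^T H^k M^k$ provided by \eqref{V10}, one arrives at the He--Yuan-style identity
\begin{equation*}
-2(v'-\widetilde{v}^k)^T Q^k(v^k-\widetilde{v}^k)=\|v^k-v'\|_{H^k}^2-\|v^{k+1}-v'\|_{H^k}^2-\|v^k-\widetilde{v}^k\|_{G^k}^2,
\end{equation*}
which when fed back yields an upper bound on $f(\widetilde{x}^k)-f(x')-\lambda^T(A\widetilde{x}^k-b)$ involving only $H^k$-weighted squared distances, a $G^k$-discrepancy, and the $\sigma$-strong-convexity residual.

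The final step is to multiply by $r^k$ and sum over $k=0,\dots,K$. Since $H^k$ varies with $k$, the $\|\cdot\|_{H^k}^2$ differences do not telescope on their own; this is precisely where \eqref{V12} intervenes, dominating each weighted increment by $\|v^k-v'\|_{H_0^k}^2-\|v^{k+1}-v'\|_{H_0^{k+1}}^2+\Theta^k-\Theta^{k+1}$, which does telescope. After summation, using $H_0^{K+1}\succeq 0$ and $\Theta^{K+1}\geq 0$ to discard the boundary remainders, the right-hand side collapses to $\frac{1}{2}(\|v^0-v'\|_{H_0^0}^2+\Theta^0)=O(1)$. Hence $\sum_{k=0}^K r^k[f(\widetilde{x}^k)-f(x')-\lambda^T(A\widetilde{x}^k-b)]=O(1)$, and applying convexity of $f$ with the probability weights $r^k/\sum_k r^k$ to the weighted average $\widetilde{X}^K$ yields the claimed $O(1/\sum_{k=0}^K r^k)$ rate. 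Specializing to $r^k=O(k)$ gives $\sum_{k=0}^K r^k=O(K^2)$ and the $O(1/K^2)$ rate.

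The main obstacle is handling the cross term $(v'-\widetilde{v}^k)^T Q^k(v^k-\widetilde{v}^k)$ when $Q^k$ is not symmetric and $H^k$ itself changes across iterations. The non-symmetry of $Q^k$ forces the symmetric part to be isolated so that the correction $-\|v^k-\widetilde{v}^k\|_{G^k}^2$ appears with the right sign, which is precisely the content of \eqref{V10}; the failure of $\|\cdot\|_{H^k}^2$ to telescope must then be absorbed into the auxiliary matrices $H_0^k$ and slacks $\Theta^k$ supplied by \eqref{V12}. Once these two structural conditions are in place, the remainder is a weighted He--Yuan telescope.
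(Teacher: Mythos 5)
Your proposal is correct and follows essentially the same route as the cited source \cite{zhang2022faster} (the paper itself only quotes this theorem, but the identical machinery appears in-line, e.g.\ in \eqref{G8}--\eqref{G9} and \eqref{G34}): substitute $u=u'$ and use skew-symmetry to reduce $(u'-\widetilde{u}^k)^TF(\widetilde{u}^k)$ to $\lambda^T(A\widetilde{x}^k-b)$, convert the cross term via $Q^k=H^kM^k$ and \eqref{V10} into the three-term He--Yuan identity, telescope through \eqref{V12}, and finish with Jensen on the weighted average. No gaps; the only cosmetic caveat is that ``$r^k=O(k)$'' should be read as $r^k=\Theta(k)$ for $\sum_k r^k=\Theta(K^2)$, which is the paper's own phrasing.
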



\section{Prediction-correction framework with $O(1/k)$ non-ergodic convergence rates} \label{subsubs5.1.1}
We first consider solving ${\rm \eqref{P1}}$.
For convenience, we define the differentiable part of the augmented Lagrangian function of \eqref{P1}:
\begin{equation}\label{G80}
\varphi^k(x,\lambda):=-\lambda^T(Ax-b)+\frac{\beta^k}{2}\|Ax-b\|^2,~\beta^k>0.
\end{equation}
It is not difficult to verify that $\varphi^k(x,\lambda)$ is $\beta^k$-gradient Lipschitz continuous with $\|\cdot\|_D$ in $x$, where $D=A^TA~{\rm or} ~\|A\|^2I_n .$
We introduce Nesterov's accelerated gradient method to  minimize the $x$-subproblem of the augmented Lagrangian function of  ${\rm \eqref{P1}}$:
\begin{eqnarray}\label{G2}
\begin{cases}
\hat{x}^k=x^k+\frac{\tau^k(1-\tau^{k-1})}{\tau^{k-1}}(x^k-x^{k-1}),~\tau^k>0,\\
x^{k+1}\in\arg\min\limits_x \{f(x)+x^T\nabla_x\varphi^k(\hat{x}^k,\lambda^k)+\frac{\beta^k}{2}\|x-\hat{x}^k\|_D^2
\}.
\end{cases}
\end{eqnarray}
The selection of $\lambda^k$ is sensitive to guarantee  convergence.
It motivates us to consider a prediction-correction framework. For convenience, we
set
\begin{equation}
1/\tau^{k-1}=(1-\tau^k)/\tau^k,~ \tau^{-1}\in(0,1) \label{G13},\tag{C1}
\end{equation}
from which one can observe that $\tau^k=O(1/k)$ for $k\rightarrow\infty$.
We define\begin{eqnarray}
\bar{x}^{k+1}:=x^{k+1}/\tau^k-(1-\tau^k){x}^k/\tau^k.\label{G12}
\end{eqnarray}

By the optimality condition of the $x$-subproblem in \eqref{G2}, we have
\begin{equation}\label{G4}
\begin{aligned}
&f(x)-f(x^{k+1})+(x-x^{k+1})^T[-A^T\lambda^k\\+&\beta^kA^T(A\hat{x}^{k}-b)+\beta^kD(x^{k+1}-\hat{x}^k)]\geq0,~\forall x.
\end{aligned}
\end{equation}
Multiplying both sides of \eqref{G4} by $(1-\tau^k)/\tau^k$ with $x=x^k$  and then adding it to \eqref{G4} yields that
\begin{equation}\label{G6}
\begin{aligned}
&\frac{1}{\tau^k}[f(x)-f(x^{k+1})]-\frac{1}{\tau^{k-1}}[f(x)-f(x^{k})]
+(x-\bar{x}^{k+1})^T[-A^T\lambda^k\\&+\beta^kA^T(A\hat{x}^{k}-b)+\beta^kD(x^{k+1}-\hat{x}^k)   ]\geq0,~\forall x.
\end{aligned}
\end{equation}
According to  the  definitions of $\bar{x}^{k+1}$ and $\hat{x}^k$,  \eqref{G6} is equivalent to
\begin{equation}\label{G7}
\begin{aligned}
&\frac{1}{\tau^k}[f(x)-f(x^{k+1})]-\frac{1}{\tau^{k-1}}[f(x)-f(x^{k})]
+(x-\bar{x}^{k+1})^T[-A^T\lambda^k\\&+\tau^k\beta^kA^T(A\bar{x}^{k+1}-b)+\tau^k\beta^k(D-A^TA)(\bar{x}^{k+1}-\bar{x}^k)\\&+(1-\tau^k) \beta^kA^T(A{x}^{k}-b) ]\geq0,~\forall x.
\end{aligned}
\end{equation}
To introduce the prediction-correction framework, the definitions of $u$ and $F(u)$ defined  in \eqref{V2} are adopted. We define $v^k:=(\bar{x}^k,{\lambda}^k)$,  the artificial vectors $\breve{x}^k$ and  $\widetilde{v}^k:=(\widetilde{x}^k,\widetilde{\lambda}^k)$ as
\begin{equation}\label{G5}
\breve{x}^k:=x^{k+1},~
\widetilde{x}^k:=\bar{x}^{k+1},~{\rm and}~\widetilde{\lambda}^k:=\lambda^k-\tau^k\beta^k(A\widetilde{x}^k-b).\end{equation}
Then, by the definition of $\widetilde{\lambda}^k$ in \eqref{G5}, it holds that
\begin{equation}\label{G57}
(\lambda-\widetilde{\lambda}^k)^T[
(A\widetilde{x}^k-b)-\frac{1}{\tau^k\beta^k}( \lambda^k- \widetilde{\lambda}^k)	
]\geq0,~\forall \lambda.
\end{equation}
Combining \eqref{G7} and \eqref{G57}, we have
\begin{equation}\label{G15}
\begin{aligned}
&\frac{1}{\tau^k}[f(x)-f(\breve{x}^{k})]-\frac{1}{\tau^{k-1}}[f(x)-f(\breve{x}^{k-1})]+(u-\widetilde{u}^k)^TF(\widetilde{u}^k)\\&+(1-\tau^k)\beta^k(A(x-\widetilde{x}^k))^T(A\breve{x}^{k-1}-b)\geq(v-\widetilde{v}^k)Q^k(v^k-\widetilde{v}^k),~\forall u.
\end{aligned}
\end{equation}
where
\begin{equation}
\label{G19}
Q^k=\begin{pmatrix}
\tau^k\beta^k(D-A^TA)&0\\
0&\frac{1}{\tau^k\beta^k}I_l
\end{pmatrix}.
\end{equation}
Let us define
\begin{equation}
\label{G20}
\begin{aligned}
&M^k=\begin{pmatrix}
I_n&0\\
0&\gamma I_l
\end{pmatrix},~ H^k=\begin{pmatrix}
\tau^k\beta^k( D-A^TA)&0\\
0&\frac{1}{\gamma\tau^k\beta^k}I_l
\end{pmatrix}, \\& G^k= \begin{pmatrix}
\tau^k\beta^k(D-A^TA)&0\\
0&\frac{2-\gamma}{\tau^k\beta^k}I_l
\end{pmatrix},\\
&\lambda^{k+1}=\lambda^k-\gamma\tau^k\beta^k(A\bar{x}^{k+1}-b)
=\lambda^k-\gamma(\lambda^k-\widetilde{\lambda}^k),~\gamma>0.
\end{aligned}
\end{equation}
Then $H^k$ and $G^k$ satisfy the convergence conditions  \eqref{V9}-\eqref{V10} and
\begin{equation}\label{G21}
v^{k+1}=v^k-M^k(v^k-\widetilde{v}^k).
\end{equation}
Consequently, we obtain
\begin{equation}\label{G8}
\begin{aligned}
&\frac{1}{\tau^k}[f(x)-f(\breve{x}^{k})]-\frac{1}{\tau^{k-1}}[f(x)-f(\breve{x}^{k-1})]+(u-\widetilde{u}^k)^TF(\widetilde{u}^k)\\&+(1-\tau^k)\beta^k(A(x-\widetilde{x}^k))^T(A\breve{x}^{k-1}-b)\\\geq&
(v-\widetilde{v}^k)Q^k(v^k-\widetilde{v}^k)
\\=&\frac{1}{2}\Big(\|v^{k+1}-v\|^2_{H^k}-\|v^k-v\|^2_{H^k}+\|v^k-\widetilde{v}^k\|^2_{G^k}
\Big),~\forall u.
\end{aligned}
\end{equation}
Let $\tau^k\beta^k=\beta>0$. Then the convergence condition  \eqref{V12}
holds with $r^k=1$, $\sigma=0$, $H_0^k=H^k=\begin{pmatrix}
\beta(D-A^TA)&0\\
0&\frac{1}{\gamma\beta}I_l
\end{pmatrix}$ and $\varTheta^{k+1}-\varTheta^{k}=\|v^k-\widetilde{v}^k\|^2_{G^k}$.  For convenience, we define \begin{equation}\label{G35}
S^{k+1}:=f(x')-f(\breve{x}^{k})+\lambda^T(A\breve{x}^{k}-b),~x'\in\Omega.
\end{equation}
Since \begin{eqnarray*}	(u'-\widetilde{u}^k)^TF(\widetilde{u}^k)=\lambda^T(A\bar{x}^{k+1}-b)&=& \frac{1}{\tau^k}\lambda^T(A{x}^{k+1}-b)-\frac{1}{\tau^{k-1}}\lambda^T(A{x}^{k}-b)\\&=&\frac{1}{\tau^k}\lambda^T(A\breve{x}^{k}-b)-\frac{1}{\tau^{k-1}}\lambda^T(A\breve{x}^{k-1}-b),
\end{eqnarray*}
substituting $x=x'$, $u=u'$ and $v=v'$ into
\eqref{G8} yields that
\begin{equation}\label{G9}
\begin{aligned}
&\frac{1}{\tau^k}S^{k+1}-\frac{1}{\tau^{k-1}}S^k+(1-\tau^k)\beta^k(b-A\widetilde{x}^k)^T(A\breve{x}^{k-1}-b)
\\\geq&\frac{1}{2}\Big(\|v^{k+1}-v'\|^2_{H^k}-\|v^k-v'\|^2_{H^k}+\|v^k-\widetilde{v}^k\|^2_{G^k}\Big)
\\\geq&\frac{1}{2}\Big(\|v^{k+1}-v'\|^2_{H^{k+1}_0}-\|v^k-v'\|^2_{H^k_0}+\|v^k-\widetilde{v}^k\|^2_{G^k}
\Big).
\end{aligned}
\end{equation}
Note that we can verify that
\begin{eqnarray}
&&\|v^k-\widetilde{v}^k\|^2_{G^k}\geq(2-\gamma)\tau^k\beta^k\|A\widetilde{x}^{k}-b\|^2,\label{G10}\\
&&\|A{x}^{k+1}-b\|^2=(\tau^k)^2\|A\bar{x}^{k+1}-b\|^2+(1-\tau^k)^2\|A{x}^{k}-b\|^2\nonumber\\
&&+2\tau^k(1-\tau^k)(A\bar{x}^{k+1}-b)^T(Ax^k-b).\label{G11}
\end{eqnarray}
The equalities \eqref{G5}, \eqref{G11} and \eqref{G13} imply that
\begin{equation}\label{G14}
\begin{aligned}
\frac{1}{(\tau^k)^2}	\|A\breve{x}^{k}-b\|^2=&\|A\widetilde{x}^{k}-b\|^2+\frac{1}{(\tau^{k-1})^2}\|A\breve{x}^{k-1}-b\|^2\\
&+2\frac{1-\tau^k}{\tau^{k}}(A\widetilde{x}^{k}-b)^T(A\breve{x}^{k-1}-b).
\end{aligned}
\end{equation}
If $\gamma=1$, it follows from \eqref{G9}, \eqref{G10} and \eqref{G14} that
\begin{equation}
\begin{aligned}
&\frac{1}{\tau^k}[S^{k+1}-\frac{\beta}{2\tau^k}\|A\breve{x}^{k}-b\|^2]-
\frac{1}{\tau^{k-1}}[S^{k}-\frac{\beta}{2\tau^{k-1}}\|A\breve{x}^{k-1}-b\|^2]\\
\geq&
\frac{1}{2}\Big(\|v^{k+1}-v'\|^2_{H^{k+1}_0}-\|v^k-v'\|^2_{H^k_0}
\Big).
\end{aligned}	
\end{equation}
That is, the sequence
\[ 	\left\{\frac{1}{\tau^{k-1}}[S^{k}-\frac{\beta}{2\tau^{k-1}}\|A\breve{x}^{k-1}-b\|^2]-\frac{1}{2}\|v^k-v'\|^2_{H_0^k} \right\}
\]
is monotonically non-decreasing. Consequently,
we can establish $O(1/k)$ non-ergodic convergence rate  of
$f(\breve{x}^{k})-f(x')-\lambda^T(A\breve{x}^{k}-b).$

Now we can summarize the above analysis as the following prediction-correction framework.
\begin{framed}
	\noindent{\bf[Prediction step.]} With given $\breve{x}^{k-1}$ and $v^k$, find $\breve{x}^k$ and $\widetilde{u}^k$  such that
	\begin{equation}\label{G27}\tag{PS1}
	\begin{aligned}
	&\frac{1}{\tau^k}[f(x)-f(\breve{x}^{k})]-\frac{1}{\tau^{k-1}}
	[f(x)-f(\breve{x}^{k-1})]+(u-\widetilde{u}^k)^TF(\widetilde{u}^k)\\&+c^k(A(x-\widetilde{x}^k))^T(A\breve{x}^{k-1}-b)\geq(v-\widetilde{v}^k)^TQ^k(v^k-\widetilde{v}^k),~\forall u,
	\end{aligned}
	\end{equation}
	where
	$c^k\geq0$,  $\widetilde{x}^k=\frac{1}{\tau^k}\breve{x}^{k}-\frac{1-\tau^k}{\tau^k}\breve{x}^{k-1}$, and $\tau^k$  satisfies \eqref{G13}.
	
	\noindent{\bf [Correction step.]} Update $v^{k+1}$ by
	\begin{equation}\label{G28}\tag{CS1}
	v^{k+1}=v^k-M^k(v^k-\widetilde{v}^k).
	\end{equation}
\end{framed}

The convergence is summarized in the following lemma without additional proof.

\begin{lemma}\label{L5.1}
	Let $\beta^k=\beta/\tau^k$ ($\beta>0$) and $c^k=c(1-\tau^k)\beta^k$ ($c\geq 0$). If $\{v^{k+1}\}$ generated by the prediction-correction framework \eqref{G27}-\eqref{G28} satisfies the convergence conditions $\eqref{V9}$-$\eqref{V10}$ and $\eqref{V12}$ with $r^k=1$, $\sigma=0$, and $\varTheta^{k+1}-\varTheta^{k}\geq c\tau^k\beta^k\|A\widetilde{x}^k-b\|^2$,
	then it holds that
	$$\begin{aligned}
	f(\breve{x}^{k})-f(x')-\lambda^T(A\breve{x}^{k}-b)\leq O(1/k).
	\end{aligned}
	$$
\end{lemma}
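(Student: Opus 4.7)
The plan is to run the exact argument sketched immediately before the lemma, but now starting directly from the abstract inequality \eqref{G27} rather than from the Nesterov iteration \eqref{G2}. First I would substitute $x = x'$, $u = u'$, $v = v'$ into \eqref{G27}; since $x' \in \Omega$, the term $(u' - \widetilde{u}^k)^T F(\widetilde{u}^k)$ collapses to $\lambda^T(A\widetilde{x}^k - b)$, which, together with the $1/\tau^k$-scaled objective differences and the definition \eqref{G35} of $S^{k+1}$, produces the telescoping pair $S^{k+1}/\tau^k - S^k/\tau^{k-1}$ exactly as in \eqref{G9}. The feasibility cross-term becomes $-c^k(A\widetilde{x}^k - b)^T(A\breve{x}^{k-1} - b)$.

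Next, I would apply the convergence conditions. \eqref{V9}--\eqref{V10} convert the $Q^k$ bilinear form on the right of \eqref{G27} into the standard quadratic combination $\tfrac{1}{2}\bigl(\|v^{k+1} - v'\|^2_{H^k} - \|v^k - v'\|^2_{H^k} + \|v^k - \widetilde{v}^k\|^2_{G^k}\bigr)$, and \eqref{V12} with $r^k = 1$, $\sigma = 0$ replaces the $H^k$-norms by $H_0^k$-norms at the cost of picking up $\varTheta^{k+1} - \varTheta^k$ on the right.

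The main obstacle, and the only genuinely nontrivial step, is handling the mixed feasibility term $-c^k(A\widetilde{x}^k - b)^T(A\breve{x}^{k-1} - b)$. Here I would invoke the algebraic identity \eqref{G14} scaled by $c\beta/2$: because $c^k = c(1-\tau^k)\beta^k$ and $\tau^k\beta^k = \beta$ is constant, the cross product rewrites as a telescoping difference between $\|A\breve{x}^k - b\|^2/(\tau^k)^2$ and $\|A\breve{x}^{k-1} - b\|^2/(\tau^{k-1})^2$, plus a residual proportional to $-\|A\widetilde{x}^k - b\|^2$. That residual is precisely what the hypothesis $\varTheta^{k+1} - \varTheta^k \geq c\tau^k\beta^k\|A\widetilde{x}^k - b\|^2 = c\beta\|A\widetilde{x}^k - b\|^2$ absorbs.

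After this absorption, the composite sequence
\[
\frac{1}{\tau^{k-1}}\Bigl(S^k - \frac{c\beta}{2\tau^{k-1}}\|A\breve{x}^{k-1} - b\|^2\Bigr) - \tfrac{1}{2}\|v^k - v'\|^2_{H_0^k} - \varTheta^k
\]
is monotonically non-decreasing. Combining this with \eqref{G13}, which forces $1/\tau^{k-1} = \Theta(k)$, and noting that each nonnegative component of the monotone quantity is bounded above by its initial value, rearranging yields $S^{k+1} = f(\breve{x}^k) - f(x') - \lambda^T(A\breve{x}^k - b) \leq O(1/k)$, which is precisely the claimed non-ergodic rate.
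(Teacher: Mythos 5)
Your proposal is correct and follows essentially the same route as the paper, which proves this lemma implicitly via the derivation \eqref{G8}--\eqref{G14} preceding it: substitute $u=u'$, apply \eqref{V9}--\eqref{V12}, rewrite the cross term with \eqref{G14}, absorb the residual $\tfrac{c\beta}{2}\|A\widetilde{x}^k-b\|^2$ via the $\varTheta$ hypothesis, and read off the rate from the resulting monotone sequence and $\tau^{k-1}=O(1/k)$. Only two cosmetic slips: once the $\varTheta$-increment has been used to cancel the residual, the $-\varTheta^k$ term should not also appear in the monotone quantity, and the paper's $S^{k+1}$ is defined as $f(x')-f(\breve{x}^{k})+\lambda^T(A\breve{x}^{k}-b)$, i.e.\ the negative of the gap you bound.
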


Throughout the following of  section \ref{subsubs5.1.1},  we adopt the definitions of $Q^k$ in \eqref{G19};  $M^k$, $H^k$, $G^k$ in \eqref{G20};
$u$, $F(u)$  in \eqref{V2} and $\tau^k$ in \eqref{G13}.
Next, we show how to construct algorithms satisfying  the general case  $M^k$, $H^k$, and $G^k$ with $\gamma\in(0,2]$. It suffices to verify the conditions presented in Lemma \ref{L5.1}.

\subsection{Correcting multiplier twice}

We consider updating $\{x^{k+1}\}$   in the following:
\begin{eqnarray}\label{G120}
\begin{cases}
\hat{\lambda}^k=\lambda^k-(1-\tau^k)\beta^k(A{x}^{k}-b),\\
\hat{x}^k=x^k+\frac{\tau^k(1-\tau^{k-1})}{\tau^{k-1}}(x^k-x^{k-1}),\\
x^{k+1}\in\arg\min\limits_x \{f(x)+x^T\nabla_x\varphi^k(\hat{x}^k,\hat{\lambda}^k)+\frac{\beta^k}{2}\|x-\hat{x}^k\|_D^2\}.
\end{cases}
\end{eqnarray}
Define
$u^k=v^k:=(\bar{x}^k,\bar{\lambda}^k)$ with $\{\bar{x}^{k}\}$   in \eqref{G12},
the artificial vectors $\breve{x}^k$ and  $\widetilde{v}^k:=(\widetilde{x}^k,\widetilde{\lambda}^k)$ as
\begin{eqnarray}
\label{G22}
\bm{{\rm(the ~first ~correction)}}~~
\bar{\lambda}^k:=\lambda^k-\gamma(1-\tau^k)\beta^k(A\breve{x}^{k-1}-b),\\
\label{G16}
\breve{x}^k:=x^{k+1},~
\widetilde{x}^k:=\bar{x}^{k+1},~{\rm and}~\widetilde{\lambda}^k:=\bar{\lambda}^k-\tau^k\beta^k(A\widetilde{x}^k-b).\end{eqnarray}

According to the  optimality condition of the $x$-subproblem in \eqref{G120}, we obtain
\begin{equation}\label{G121}
\begin{aligned}
&f(x)-f({x}^{k+1})+(x-{x}^{k+1})^T[-A^T\hat{\lambda}^k \\&+\beta^kA^T(A\hat{x}^{k}-b)+	\beta^kD({x}^{k+1}-\hat{x}^k)
]\geq0,~\forall x.\\\Leftrightarrow
&	f(x)-f(\breve{x}^{k})+(x-\breve{x}^{k})^T[
-A^T{\lambda}^k\\&+\gamma(1-\tau^k)\beta^k   A^T(A\breve{x}^{k-1}-b)+\tau^k\beta^k(D-A^TA)(\widetilde{x}^{k}-\bar{x}^k)\\&+\tau^k\beta^kA^T(A\widetilde{x}^{k}-b)
+(2-\gamma)(1-\tau^k) \beta^kA^T(A\breve{x}^{k-1}-b)
]\geq0,~\forall x.
\end{aligned}
\end{equation}
Then multiplying both sides of \eqref{G121} by $(1-\tau^k)/\tau^k$ with $x=\breve{x}^{k-1}$  and then adding it to \eqref{G121} yields that
\begin{equation}\label{G17}
\begin{aligned}
&\frac{1}{\tau^k}[f(x)-f(\breve{x}^{k})]-\frac{1}{\tau^{k-1}}[f(x)-f(\breve{x}^{k-1})]
+(x-\widetilde{x}^{k})^T[-A^T\widetilde{\lambda}^k \\&	+\tau^k\beta^k(D-A^TA)(\widetilde{x}^{k}-\bar{x}^k)
+(2-\gamma)(1-\tau^k) \beta^kA^T(A\breve{x}^{k-1}-b) ]\geq0,~\forall x.
\end{aligned}
\end{equation}
It follows from the definition of  $\widetilde{\lambda}^k$ \eqref{G16} that
\begin{equation}\label{G84}
(\lambda-\widetilde{\lambda}^k)^T[
(A\widetilde{x}^k-b)-\frac{1}{\tau^k\beta^k}(\bar{ \lambda}^k- \widetilde{\lambda}^k)	
]\geq0,~\forall \lambda.
\end{equation}
Combining \eqref{G17} and \eqref{G84} yields that
\begin{equation}\label{G18}
\begin{aligned}
&\frac{1}{\tau^k}[f(x)-f(\breve{x}^{k})]-\frac{1}{\tau^{k-1}}[f(x)-f(\breve{x}^{k-1})]+(u-\widetilde{u}^k)^TF(\widetilde{u}^k)\\+&(2-\gamma)(1-\tau^k)\beta^k(A(x-\widetilde{x}^k))^T(A\breve{x}^{k-1}-b)\geq(v-\widetilde{v}^k)Q^k(v^k-\widetilde{v}^k),~\forall u,
\end{aligned}
\end{equation}
where $Q^k$ is defined in \eqref{G19}. Then  the prediction step \eqref{G27} holds with $c^k=(2-\gamma)(1-\tau^k)\beta^k$. If we set the dual update rule as
\begin{equation}\label{G23}
\bm{{\rm(the ~second ~correction)}}~~
\bar{\lambda}^{k+1}=\bar{\lambda}^k-\gamma\tau^k\beta^k(A\bar{x}^{k+1}-b),~\gamma\in(0,2],
\end{equation}
then $v^{k+1}$ satisfies the correction step \eqref{G28} with $M^k$ defined in \eqref{G20}.

We can verify the conditions \eqref{V9}-\eqref{V10}. Let $\tau^k\beta^k=\beta>0$. Then, according to the definition of $G^k$,  $\|v^k-\widetilde{v}^k\|^2_{G^k}$ in \eqref{G10} holds.
We can verify the condition \eqref{V12}.

In this situation, according to \eqref{G22} and \eqref{G23},
we can verify that
\begin{eqnarray*}
	\bar{\lambda}^{k+1}&\overset{\eqref{G22}}{=}&\lambda^{k+1}-\gamma(1-\tau^{k+1})\beta^{k+1}(A\breve{x}^{k}-b),\\\bar{\lambda}^{k+1}&\overset{\eqref{G22},\eqref{G23}}{=}&\lambda^k-\gamma(1-\tau^k)\beta^k(A\breve{x}^{k-1}-b)-\gamma\tau^k\beta^k(A\bar{x}^{k+1}-b).
\end{eqnarray*}
Therefore, $\lambda^{k+1}$ is updated from $\lambda^k$ by
\begin{eqnarray}
\lambda^{k+1}&=&\lambda^k-\gamma[ (1-\tau^{k})\beta^k(Ax^{k}-b)-(1-\tau^{k+1})\beta^{k+1}(Ax^{k+1}-b) ]\nonumber\\&&-\gamma\tau^k\beta^k(A\bar{x}^{k+1}-b)\nonumber\\&\overset{\tau^k\beta^k=\beta,\eqref{G13}}{=}&\lambda^k.\nonumber
\end{eqnarray}
We obtain a penalty method if $\lambda^0=0$. More discussions on the equivalence  between $correcting ~v^k~ twice$ and the penalty method for solving \eqref{P2} and \eqref{P3} are presented  in Theorems \ref{T4.5} and  \ref{T4.8}.

\subsection{Correcting multiplier once}
We consider updating $\{x^{k+1}\}$  in the following:
\begin{eqnarray}\label{G110}
\begin{cases}
\hat{\lambda}^k=\lambda^k-(1-\gamma)(1-\tau^k)\beta^k(A{x}^{k}-b),\\
\hat{x}^k=x^k+\frac{\tau^k(1-\tau^{k-1})}{\tau^{k-1}}(x^k-x^{k-1}),\\
x^{k+1}\in\arg\min\limits_x \{f(x)+x^T\nabla_x\varphi^k(\hat{x}^k,\hat{\lambda}^k)+\frac{\beta^k}{2}\|x-\hat{x}^k\|_D^2\}.
\end{cases}
\end{eqnarray}
We define $v^k:=(\bar{x}^k,{\lambda}^k)$ with $\bar{x}^{k}$  in  \eqref{G12},  the artificial vectors $\breve{x}^k$ and  $\widetilde{v}^k:=(\widetilde{x}^k,\widetilde{\lambda}^k)$ in \eqref{G5}.

According to the  optimality condition of the $x$-subproblem in \eqref{G110}, we obtain
\begin{equation}\label{G112}
\begin{aligned}
&f(x)-f({x}^{k+1})+(x-{x}^{k+1})^T[-A^T\hat{\lambda}^k \\&+\beta^kA^T(A\hat{x}^{k}-b)+	\beta^kD({x}^{k+1}-\hat{x}^k)
]\geq0,~\forall x.\\\Leftrightarrow
&	f(x)-f(\breve{x}^{k})+(x-\breve{x}^{k})^T[
-A^T{\lambda}^k+\tau^k\beta^k(D-A^TA)(\widetilde{x}^{k}-\bar{x}^k)\\&+\tau^k\beta^kA^T(A\widetilde{x}^{k}-b)
+(2-\gamma)(1-\tau^k) \beta^kA^T(A\breve{x}^{k-1}-b) ]\geq0,~\forall x.
\end{aligned}
\end{equation}
Multiplying both sides of \eqref{G112} by $(1-\tau^k)/\tau^k$ with $x=\breve{x}^{k-1}$  and then adding it to \eqref{G112} yields that
\begin{equation}\label{G111}
\begin{aligned}
&\frac{1}{\tau^k}[f(x)-f(\breve{x}^{k})]-\frac{1}{\tau^{k-1}}[f(x)-f(\breve{x}^{k-1})]
+(x-\widetilde{x}^{k})^T[-A^T\widetilde{\lambda}^k\\&
+\tau^k\beta^k(D-A^TA)(\widetilde{x}^{k}-\bar{x}^k)
+(2-\gamma)(1-\tau^k) \beta^kA^T(A\breve{x}^{k-1}-b) ]\geq0,~\forall x.
\end{aligned}
\end{equation}
By  the definition of $\widetilde{\lambda}^k$ in \eqref{G5} (or \eqref{G57}), we can show that the prediction step \eqref{G27} holds with
$c^k=(2-\gamma)(1-\tau^k)\beta^k$.  If  $\lambda^{k+1}$ satisfies \eqref{G20} with $\gamma\in(0,2]$, the  correction step holds.
The conditions \eqref{V9}-\eqref{V10} and \eqref{V12} can also be verified by using $\tau^k\beta^k=\beta>0$.

Different from  $correcting ~multiplier~ twice$ with fixed $\lambda^k$, $correcting ~multiplier$ $once$  updates $\lambda^k$.

\section{Prediction-correction framework with $O(1/k^2)$ non-ergodic convergence rates} \label{s4.2}

This section aims at establishing $O(1/k^2)$ non-ergodic convergence rates under the condition:
\begin{equation}\label{G29}\tag{C2}
1/(\tau^{k-1})^2=(1-\tau^k)/(\tau^k)^2,~ \tau^{-1}\in(0,1).
\end{equation}
Clearly, it implies from \eqref{G29}  by induction that $1/\tau^k\geq (k+1)/2$.

Motivated by the framework with ergodic convergence rates presented in subsection \ref{S2.2} and the framework with $O(1/k)$ non-ergodic convergence presented in section \ref{subsubs5.1.1}, we present the following prediction-correction framework:
\begin{framed}
	\noindent{\bf[Prediction step.]}  With given $\breve{x}^{k-1}$ and $v^k$, find $\breve{x}^k$ and $\widetilde{u}^k$  such that
	\begin{equation}\label{G54}\tag{PS2}
	\begin{aligned}
	&\frac{1}{(\tau^k)^2}[f(x)-f(\breve{x}^{k})]-\frac{1}{(\tau^{k-1})^2}[f(x)-f(\breve{x}^{k-1})]
	\\
	&+\frac{1}{\tau^k}(u-\widetilde{u}^k)^TF(\widetilde{u}^k)+ c^k(A(x-\widetilde{x}^k))^T(A\breve{x}^{k-1}-b)\\
	&\geq\frac{1}{\tau^k}\Big[(v-\widetilde{v}^k)^TQ^k(v^k-\widetilde{v}^k)
	+\frac{\sigma}{2}\|z^{k}-z\|^2_R\Big],
	~\forall u,
	\end{aligned}
	\end{equation}
	where
	$c^k\geq0$, $\sigma\geq0$,
	$\widetilde{x}^k=\frac{1}{\tau^k}\breve{x}^{k}-\frac{1-\tau^k}{\tau^k}\breve{x}^{k-1}$,  and $\tau^k$ satisfies \eqref{G29}.
	
	\noindent {\bf [Correction step.]} Update $v^{k+1}$ by
	\begin{equation}\label{G55}\tag{CS2}
	v^{k+1}=v^k-M^k(v^k-\widetilde{v}^k).
	\end{equation}
\end{framed}

The following result
inherits from Lemma \ref{L5.1}.
\begin{lemma}\label{L5.2}
	Let $\beta^k=\beta/(\tau^k)^2$ ($\beta>0$) and $c^k=c(1-\tau^k)\beta^k/{\tau^k}$  ($c\geq0$).	If $\{v^{k+1}\}$ generated by the prediction-correction framework \eqref{G54}-\eqref{G55} satisfies the convergence conditions $\eqref{V9}$-$\eqref{V10}$ and $\eqref{V12}$ with $r^k=1/\tau^k$ and $\varTheta^{k+1}-\varTheta^{k}\geq c\beta^k\|A\widetilde{x}^k-b\|^2$,
	then it holds that
	$$\begin{aligned}
	f(\breve{x}^{k})-f(x')-\lambda^T(A\breve{x}^{k}-b)\leq O(1/k^2).
	\end{aligned}
	$$
\end{lemma}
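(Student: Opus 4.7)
The plan is to parallel the proof of Lemma \ref{L5.1}, now rescaling every telescoping piece by an extra factor of $1/\tau^k$ in order to exploit the stronger relation \eqref{G29}. Concretely, I will substitute $x=x'$, $u=u'$, $v=v'$ into the prediction step \eqref{G54}, rewrite $(v'-\widetilde{v}^k)^T Q^k(v^k-\widetilde{v}^k)$ as a difference of $H^k$-weighted squared distances via \eqref{V9}-\eqref{V10}, and then dominate what remains using \eqref{V12} with $r^k=1/\tau^k$.

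The first key step is to observe that $(u'-\widetilde{u}^k)^T F(\widetilde{u}^k)=\lambda^T(A\widetilde{x}^k-b)$; combining this with $\widetilde{x}^k=\tfrac{1}{\tau^k}\breve{x}^k-\tfrac{1-\tau^k}{\tau^k}\breve{x}^{k-1}$ and the identity $(1-\tau^k)/(\tau^k)^2=1/(\tau^{k-1})^2$ from \eqref{G29} yields
$$
\frac{1}{\tau^k}(u'-\widetilde{u}^k)^T F(\widetilde{u}^k)=\frac{1}{(\tau^k)^2}\lambda^T(A\breve{x}^k-b)-\frac{1}{(\tau^{k-1})^2}\lambda^T(A\breve{x}^{k-1}-b).
$$
With $S^{k+1}$ as in \eqref{G35}, the function-plus-multiplier pieces on the left of \eqref{G54} therefore collapse to $\frac{1}{(\tau^k)^2}S^{k+1}-\frac{1}{(\tau^{k-1})^2}S^k$.

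The main obstacle is to show that the cross term $c^k(A(x'-\widetilde{x}^k))^T(A\breve{x}^{k-1}-b)$ admits a telescopic decomposition whose residue is absorbed by the hypothesis $\varTheta^{k+1}-\varTheta^k\geq c\beta^k\|A\widetilde{x}^k-b\|^2$. Squaring the identity $A\breve{x}^k-b=\tau^k(A\widetilde{x}^k-b)+(1-\tau^k)(A\breve{x}^{k-1}-b)$, and then applying \eqref{G29} (to rewrite $(1-\tau^k)^2/(\tau^k)^2$ as $(1-\tau^k)/(\tau^{k-1})^2$) together with $(1-\tau^k)\beta^k=\beta^{k-1}$, I expect to arrive at
$$
c^k(b-A\widetilde{x}^k)^T(A\breve{x}^{k-1}-b)=-\tfrac{c\beta^k}{2(\tau^k)^2}\|A\breve{x}^k-b\|^2+\tfrac{c\beta^k}{2}\|A\widetilde{x}^k-b\|^2+\tfrac{c\beta^{k-1}}{2(\tau^{k-1})^2}\|A\breve{x}^{k-1}-b\|^2.
$$
The middle summand is then cancelled by the $\varTheta$-increment inherited through \eqref{V12}, while the other two align exactly with the outer telescoping scale.

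Putting the pieces together should yield
$$
\begin{aligned}
\frac{1}{(\tau^k)^2}&\!\left[S^{k+1}-\tfrac{c\beta^k}{2}\|A\breve{x}^k-b\|^2\right]-\tfrac{1}{2}\|v^{k+1}-v'\|^2_{H_0^{k+1}}\\
&\geq\frac{1}{(\tau^{k-1})^2}\!\left[S^k-\tfrac{c\beta^{k-1}}{2}\|A\breve{x}^{k-1}-b\|^2\right]-\tfrac{1}{2}\|v^k-v'\|^2_{H_0^k},
\end{aligned}
$$
so the displayed sequence is monotonically non-decreasing in $k$. Lower-bounding it by its initial value and using $1/\tau^k\geq(k+1)/2$ from \eqref{G29} gives $S^{k+1}\geq -O(1/k^2)$, which is precisely the estimate $f(\breve{x}^k)-f(x')-\lambda^T(A\breve{x}^k-b)\leq O(1/k^2)$.
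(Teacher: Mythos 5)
Your proposal is correct and follows essentially the same route as the paper's proof: substitute $u=u'$ into \eqref{G54}, telescope the multiplier term via \eqref{G29} (the paper's \eqref{G33}), expand $\|A\breve{x}^k-b\|^2$ to absorb the cross term (your identity is exactly the paper's \eqref{G32} rescaled by $c\beta/2$), cancel the $\tfrac{c\beta^k}{2}\|A\widetilde{x}^k-b\|^2$ piece against the $\varTheta$-increment from \eqref{V12}, and conclude by monotonicity together with $1/\tau^k\geq(k+1)/2$.
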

\begin{proof}
	According to the conditions given in Lemma \ref{L5.2},  by substituting $x=x',~z=z',~u=u'$ and $v=v'$ into \eqref{G54}, we have
	\begin{equation}\label{G34}
	\begin{aligned}
	&\frac{1}{(\tau^k)^2}[f(x')-f(\breve{x}^{k})]-\frac{1}{(\tau^{k-1})^2}[f(x')-f(\breve{x}^{k-1})]+\frac{1}{\tau^k}(u'-\widetilde{u}^k)^TF(\widetilde{u}^k)\\&+c^k(b-A\widetilde{x}^k)^T(A\breve{x}^{k-1}-b)\\\geq&\frac{1}{\tau^k}\Big((v'-\widetilde{v}^k)^TQ^k(v^k-\widetilde{v}^k)
	+\frac{\sigma}{2}\|z^{k}-z'\|^2_R\Big)\\
	\geq&\frac{1}{2\tau^k}\Big(\|v^{k+1}-v' \|^2_{H^k} +\sigma\|{z}^{k}-z'\|^2_R -\|v^k-v' \|^2_{H^k}+\|v^k-\widetilde{v}^k\|^2_{G^k}\Big)
	\\\geq&
	\frac{1}{2}\Big(\|v^{k+1}-v'\|^2_{H^{k+1}_0}-\|v^k-v'\|^2_{H^k_0}+\frac{c\beta}{(\tau^k)^2}\|A\widetilde{x}^k-b\|^2\Big).
	\end{aligned}
	\end{equation}
	According to \eqref{G29},  we obtain
	\begin{eqnarray}
	&&\begin{aligned}	\frac{1}{\tau^k}(u'-\widetilde{u}^k)^TF(\widetilde{u}^k)&=\frac{1}{\tau^k}\lambda^T(A\widetilde{x}^{k}-b)\\
	&= \frac{1}{(\tau^k)^2}\lambda^T(A\breve{x}^{k}-b)-\frac{1}{(\tau^{k-1})^2}\lambda^T(A\breve{x}^{k-1}-b),
	\end{aligned}\label{G33}\\
	&&\begin{aligned}
	\frac{1}{(\tau^k)^4}	\|A\breve{x}^{k}-b\|^2&=\frac{1}{(\tau^k)^2}\|A\widetilde{x}^{k}-b\|^2+\frac{1}{(\tau^{k-1})^4}\|A\breve{x}^{k-1}-b\|^2\\
	&~~+2\frac{1-\tau^k}{(\tau^{k})^3}(A\widetilde{x}^{k}-b)^T(A\breve{x}^{k-1}-b).
	\end{aligned}\label{G32}
	\end{eqnarray} 		
	Combining \eqref{G34}, \eqref{G33} and \eqref{G32} yields that
	\begin{equation}\nonumber
	\begin{aligned}
	&\frac{1}{(\tau^k)^2}[S^{k+1}-\frac{c\beta }{2(\tau^k)^2}\|A\breve{x}^{k}-b\|^2]-
	\frac{1}{(\tau^{k-1})^2}[S^{k}-\frac{c\beta}{2(\tau^{k-1})^2}\|A\breve{x}^{k-1}-b\|^2]\\
	\geq&
	\frac{1}{2}\Big(\|v^{k+1}-v'\|^2_{H^{k+1}_0}-\|v^k-v'\|^2_{H^k_0}
	\Big),
	\end{aligned}
	\end{equation} 	
	where $S^k$ is defined in \eqref{G35}. We complete the proof.
\end{proof}

In Lemmas \ref{L5.1} and \ref{L5.2}, our settings satisfy $\beta^k\rightarrow+\infty$ as $k\rightarrow+\infty$. In the following result, we consider the setting $\beta^k\equiv\beta>0$. It is used to establish $O(1/k^2)$ non-ergodic convergence rate for  solving \eqref{P3}.
\begin{lemma}\label{L5.3}
	Let $\beta^k\equiv\beta>0$ and  $c^k=c(1-\tau^k)\beta^k/{\tau^k}$ ($c\geq 0$). If the sequence $\{v^{k+1}\}$ generated by the prediction-correction framework \eqref{G54}-\eqref{G55} satisfies the convergence conditions $\eqref{V9}$-$\eqref{V10}$ and $\eqref{V12}$ with $r^k=1/\tau^k$ and $\varTheta^{k+1}-\varTheta^{k}\geq c\beta^k(\|A\widetilde{x}^k-b\|^2
	-  \frac{\tau^k}{(\tau^{k-1})^2}\|A\breve{x}^{k-1}-b\|^2
	)$,
	then it holds that
	$$\begin{aligned}
	f(\breve{x}^{k})-f(x')-\lambda^T(A\breve{x}^{k}-b)\leq O(1/k^2).
	\end{aligned}
	$$
\end{lemma}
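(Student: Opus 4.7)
The plan is to follow the proof of Lemma \ref{L5.2} in outline, substituting $x=x'$, $z=z'$, $u=u'$, and $v=v'$ into \eqref{G54}, then using \eqref{V9}--\eqref{V10} to rewrite the right-hand side as
$\tfrac{1}{2\tau^k}(\|v^{k+1}-v'\|_{H^k}^2-\|v^k-v'\|_{H^k}^2+\|v^k-\widetilde{v}^k\|_{G^k}^2 + \sigma\|z^k-z'\|_R^2)$,
and invoking \eqref{V12} with $r^k=1/\tau^k$ and the strengthened $\varTheta$-hypothesis to pass to the $H_0^k$-norms. Using \eqref{G33} to identify the $F$-term as $\tfrac{1}{(\tau^k)^2}\lambda^T(A\breve{x}^k-b)-\tfrac{1}{(\tau^{k-1})^2}\lambda^T(A\breve{x}^{k-1}-b)$ converts the $f$-part and the $\lambda$-part into $\tfrac{1}{(\tau^k)^2}S^{k+1}-\tfrac{1}{(\tau^{k-1})^2}S^k$ with $S^k$ from \eqref{G35}.

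The decisive step, which distinguishes this proof from that of Lemma \ref{L5.2}, is the handling of the cross term $c^k(b-A\widetilde{x}^k)^T(A\breve{x}^{k-1}-b)$. Because $\beta^k\equiv\beta$, one has $c^k=c(1-\tau^k)\beta/\tau^k$, so the cross term carries only one factor of $1/\tau^k$ instead of the $1/(\tau^k)^3$ that appeared in Lemma \ref{L5.2}. Accordingly, instead of using \eqref{G32} directly, I would multiply \eqref{G32} through by $(\tau^k)^2$ and simplify using \eqref{G29} (which gives $(\tau^k)^2/(\tau^{k-1})^2=1-\tau^k$) to obtain the identity
\begin{equation*}
2\tfrac{1-\tau^k}{\tau^k}(A\widetilde{x}^k-b)^T(A\breve{x}^{k-1}-b) = \tfrac{1}{(\tau^k)^2}\|A\breve{x}^k-b\|^2 - \|A\widetilde{x}^k-b\|^2 - \tfrac{1-\tau^k}{(\tau^{k-1})^2}\|A\breve{x}^{k-1}-b\|^2.
\end{equation*}
Substituting this into the cross term expresses $c^k(b-A\widetilde{x}^k)^T(A\breve{x}^{k-1}-b)$ as $-\tfrac{c\beta}{2}$ times the right-hand side of the displayed identity.

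The purpose of the extra subtractive term $-\tfrac{\tau^k}{(\tau^{k-1})^2}\|A\breve{x}^{k-1}-b\|^2$ in the $\varTheta^{k+1}-\varTheta^k$ hypothesis now becomes clear: after canceling $\tfrac{c\beta}{2}\|A\widetilde{x}^k-b\|^2$ from both sides, the residual $\|A\breve{x}^{k-1}-b\|^2$ coefficient on the left is $\tfrac{c\beta(1-\tau^k)}{2(\tau^{k-1})^2}$, while the $\varTheta$ contribution on the right supplies exactly $-\tfrac{c\beta\tau^k}{2(\tau^{k-1})^2}\|A\breve{x}^{k-1}-b\|^2$; moving this to the left and using $(1-\tau^k)+\tau^k=1$ produces the clean telescoping
\begin{equation*}
\tfrac{1}{(\tau^k)^2}\bigl[S^{k+1}-\tfrac{c\beta}{2}\|A\breve{x}^k-b\|^2\bigr] - \tfrac{1}{(\tau^{k-1})^2}\bigl[S^k-\tfrac{c\beta}{2}\|A\breve{x}^{k-1}-b\|^2\bigr] \geq \tfrac{1}{2}\bigl(\|v^{k+1}-v'\|_{H_0^{k+1}}^2-\|v^k-v'\|_{H_0^k}^2\bigr).
\end{equation*}
Summing and using $1/\tau^k\geq (k+1)/2$ from \eqref{G29} then yields $-S^{k+1}\leq O(1/k^2)$, which is the desired bound.

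The main obstacle is the algebraic bookkeeping needed to align the cross-term expansion with the strengthened $\varTheta$-penalty so that the $\|A\breve{x}^{k-1}-b\|^2$ contributions combine with coefficient $\tfrac{1}{(\tau^{k-1})^2}$ rather than $\tfrac{1-\tau^k}{(\tau^{k-1})^2}$; this is precisely what forces the specific form of the hypothesis on $\varTheta^{k+1}-\varTheta^k$ in the statement and is the only real novelty relative to the proof of Lemma \ref{L5.2}.
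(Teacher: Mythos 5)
Your proposal is correct and follows essentially the same route as the paper's proof: the identity you derive for the cross term is exactly the paper's expansion of $\frac{1}{(\tau^k)^2}\|A\breve{x}^k-b\|^2$ under \eqref{G29} (your coefficient $\frac{1-\tau^k}{(\tau^{k-1})^2}$ is just the paper's $\frac{1}{(\tau^{k-1})^2}-\frac{\tau^k}{(\tau^{k-1})^2}$ recombined), and the cancellation against the strengthened $\varTheta$-hypothesis yields the same telescoping inequality in $S^k-\frac{c\beta}{2}\|A\breve{x}^{k-1}-b\|^2$.
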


\begin{proof}
	According to the definition of $\widetilde{x}^k$, we obtain
	\begin{eqnarray}
	&&\frac{1}{(\tau^k)^2}	\|A\breve{x}^{k}-b\|^2\nonumber\\&=&\|A\widetilde{x}^{k}-b\|^2+\frac{(1-\tau^k)^2}{(\tau^k)^2}\|A\breve{x}^{k-1}-b\|^2 +2\frac{1-\tau^k}{\tau^{k}}(A\widetilde{x}^{k}-b)^T(A\breve{x}^{k-1}-b)\nonumber\\&\overset{\eqref{G29}}{=}&\|A\widetilde{x}^{k}-b\|^2+\frac{1}{(\tau^{k-1})^2}\|A\breve{x}^{k-1}-b\|^2  -  \frac{\tau^k}{(\tau^{k-1})^2}\|A\breve{x}^{k-1}-b\|^2\nonumber \\&&+2\frac{1-\tau^k}{\tau^{k}}(A\widetilde{x}^{k}-b)^T(A\breve{x}^{k-1}-b). \label{G68}
	\end{eqnarray} 	
	Since the convergence condition  $\eqref{V12}$ holds  with $r^k=1/\tau^k$ and $\varTheta^{k+1}-\varTheta^{k}=c\beta^k(\|A\widetilde{x}^k-b\|^2
	-  \frac{\tau^k}{(\tau^{k-1})^2}\|A\breve{x}^{k-1}-b\|^2
	)$,  we have
	\begin{equation}\label{G69}
	\begin{aligned}
	&\frac{1}{(\tau^k)^2}[f(x')-f(\breve{x}^{k})]-\frac{1}{(\tau^{k-1})^2}[f(x')-f(\breve{x}^{k-1})]+\frac{1}{\tau^k}(u'-\widetilde{u}^k)^TF(\widetilde{u}^k)\\&+c^k(b-A\widetilde{x}^k)^T(A\breve{x}^{k-1}-b)\\\geq&\frac{1}{\tau^k}\Big((v'-\widetilde{v}^k)^TQ^k(v^k-\widetilde{v}^k)
	+\frac{\sigma}{2}\|z^{k}-z'\|^2_R\Big)\\\geq&
	\frac{1}{2}\Big(\|v^{k+1}-v'\|^2_{H^{k+1}_0}-\|v^k-v'\|^2_{H^k_0}+c\beta \|A\widetilde{x}^{k}-b\|^2  -\frac{c\beta \tau^k}{(\tau^{k-1})^2}\|A\breve{x}^{k-1}-b\|^2
	\Big).
	\end{aligned}
	\end{equation}
	Then combining \eqref{G68} and \eqref{G69} yields that
	\begin{equation}\nonumber
	\begin{aligned}
	&	\frac{1}{(\tau^k)^2}[S^{k+1}-\frac{c\beta }{2}\|A\breve{x}^{k}-b\|^2]-
	\frac{1}{(\tau^{k-1})^2}[S^{k}-\frac{c\beta }{2}\|A\breve{x}^{k-1}-b\|^2]\\
	\geq& \frac{1}{2}\Big(\|v^{k+1}-v'\|^2_{H^{k+1}_0}-\|v^k-v'\|^2_{H^k_0}
	\Big),
	\end{aligned}
	\end{equation}
	where $S^k$ is defined in \eqref{G35}. The proof is complete.
\end{proof}

Throughout the following of section \ref{s4.2},  we adopt the definitions of $Q^k$ in \eqref{G19};  $M^k$, $H^k$, $G^k$ in \eqref{G20};
$u$, $F(u)$  in \eqref{V2} and $\tau^k$ in \eqref{G29}. We assume that $f$ is $\sigma$-strongly convex with $\sigma\geq0$.

\subsection{Correcting  multiplier twice}\label{s4.2.1}
We consider an alternative way to
solve the $x$-subproblem of the augmented Lagrangian function of  ${\rm \eqref{P1}}$:
\begin{equation}\label{G85}
\begin{cases}
\hat{\lambda}^k=\lambda^k-(1-\tau^k)\beta^k(A{x}^{k}-b),\\
\hat{x}^k=x^k+\frac{\tau^k(1-\tau^{k-1})}{\tau^{k-1}}(x^k-x^{k-1}),\\
x^{k+1}\in\arg\min\limits_x\{ f(x)+x^T\nabla_x\varphi^k(\hat{x}^k,\hat{\lambda}^k)+\frac{\beta^k}{2}\|x-\hat{x}^k\|_D^2+\frac{\sigma(1-\tau^k)}{2\tau^k}\|x-x^k\|^2\}.
\end{cases}
\end{equation}
Below we verify that \eqref{G85} satisfies the prediction-correction framework \eqref{G54}-\eqref{G55} with properly dual updating.

Let
$u^k=v^k:=(\bar{x}^k,\bar{\lambda}^k)$ with
$\bar{x}^{k}$ in \eqref{G12} and $\bar{\lambda}^k$  in \eqref{G22}. Let $\widetilde{v}^k:=(\widetilde{x}^k,\widetilde{\lambda}^k)$  with $\breve{x}^k$, $\widetilde{x}^k$, and $\widetilde{\lambda}^k$ in \eqref{G16}.

According to the optimality condition of the $x$-subproblem in \eqref{G85}, we have
\begin{equation}\label{G86}
\begin{aligned}
&f(x)-f(x^{k+1})+(x-x^{k+1})^T[-A^T\hat{\lambda}^k+\beta^kA^T(A\hat{x}^k-b)\\&+\beta^kD(x^{k+1}-\hat{x}^k)+\frac{\sigma(1-\tau^k)}{\tau^k}(x^{k+1}-x^k)]\geq\frac{\sigma}{2}\|x^{k+1}-x\|^2,~\forall x.
\end{aligned}
\end{equation}
Multiplying both sides of \eqref{G86} by $(1-\tau^k)/(\tau^k)^2$ and $1/\tau^k$, respectively, setting $x=\breve{x}^{k-1}$ in the former inequality, and then adding them yields that
\begin{small}
	\begin{equation}\label{G87}
	\begin{aligned}
	&\frac{1}{(\tau^k)^2}[f(x)-f(\breve{x}^{k})]-\frac{1}{(\tau^{k-1})^2}[f(x)-f(\breve{x}^{k-1})]
	+\frac{1}{\tau^k}(x-\widetilde{x}^{k})^T\{-A^T\widetilde{\lambda}^k+\\&\tau^k\beta^k(D-A^TA)(\widetilde{x}^{k}-\bar{x}^k)+
	(2-\gamma)(1-\tau^k) \beta^kA^T(A\breve{x}^{k-1}-b)+\frac{\sigma(1-\tau^k)}{\tau^k}(\breve{x}^k-\breve{x}^{k-1})\}\\\geq&
	\frac{\sigma}{2}\Big(
	\frac{1-\tau^k}{(\tau^k)^2}\|\breve{x}^{k}-\breve{x}^{k-1}\|^2+\frac{1}{\tau^k}\|\breve{x}^{k}-x\|^2
	\Big),~\forall x.
	\end{aligned}
	\end{equation}
\end{small}
It is not difficult to verify the following two equalities:
\begin{align}
& \frac{1-\tau^k}{(\tau^k)^2}\|\breve{x}^{k}-\breve{x}^{k-1}\|^2+\frac{1}{\tau^k}\|\breve{x}^{k}-x\|^2\label{G89}=\|\widetilde{x}^{k}-x\|^2+\frac{1-\tau^k}{\tau^k}\|\breve{x}^{k-1}-x\|^2,~\\&
2(\widetilde{x}^k-x)^T\frac{1}{\tau^k}(\breve{x}^{k}-\breve{x}^{k-1})=\|\widetilde{x}^k-x\|^2+\frac{1}{(\tau^k)^2}\|\breve{x}^{k}-\breve{x}^{k-1}\|^2-\|\breve{x}^{k-1}-x\|^2\nonumber
.
\end{align}
Then, combining \eqref{G87} with $\widetilde{\lambda}^k$ in \eqref{G16} yields that
\begin{equation}\label{G88}
\begin{aligned}
&\frac{1}{(\tau^k)^2}[f(x)-f(\breve{x}^{k})]-\frac{1}{(\tau^{k-1})^2}[f(x)-f(\breve{x}^{k-1})]
+\frac{1}{\tau^k}(u-\widetilde{u}^k)^TF(\widetilde{u}^k)\\&+\frac{1}{\tau^k}(2-\gamma)(1-\tau^k) \beta^k(A(x-\widetilde{x}^k))^T(A\breve{x}^{k-1}-b)\\&\geq
\frac{1}{\tau^k}\Big((v-\widetilde{v}^k)^TQ^k(v^k-\widetilde{v}^k) +\frac{\sigma}{2}\|\widetilde{x}^k-x\|^2+
\sigma\frac{1-\tau^k}{2(\tau^k)^2}\|\breve{x}^{k}-\breve{x}^{k-1}\|^2\Big),~\forall u.
\end{aligned}
\end{equation}
By \eqref{G88},  the prediction step \eqref{G54} holds with $c^k=(2-\gamma)(1-\tau^k) \beta^k/\tau^k$, $R=I_n$, $z^k=\widetilde{x}^k$ and $z=x$. If $\bar{\lambda}^{k+1}$ satisfies \eqref{G23},  then $v^{k+1}$ satisfies the correction step  \eqref{G55} with $M^k$ defined in \eqref{G20}.

If $D=A^TA$,
according to the definition of $G^k$, $\|v^k-\widetilde{v}^k\|^2_{G^k}$ satisfies \eqref{G10}.
If $\beta^k=\beta/(\tau^k)^2$ for $\beta>0$, the convergence condition \eqref{V12} holds with $H^k_0=\begin{pmatrix}
0&0\\
0&\frac{1}{\gamma\beta}I_l
\end{pmatrix}$, $r^k=1/\tau^k$, $\sigma=0$ and $\varTheta^{k+1}-\varTheta^{k}=(2-\gamma)\beta^k\|A\widetilde{x}^k-b\|^2$. Then we can  verify  all the conditions presented in Lemma \ref{L5.2}.

If $D=\|A\|^2I_n$, we set $\beta^k=\beta/(\tau^k)^2$ for  $\beta>0$ and suppose it holds that\begin{equation}\label{G105}
\beta \|A\|^2/(\tau^k)^2+\sigma/\tau^{k}\geq\beta\|A\|^2/(\tau^{k+1})^2,~\sigma>0.
\end{equation}
Then, we have
\begin{small}
	$$
	\begin{pmatrix}
	\frac{\beta}{(\tau^{k})^2}(\|A\|^2I_n-A^TA)&0\\
	0&\frac{1}{\gamma\beta}I_l
	\end{pmatrix} +\begin{pmatrix}
	\frac{\sigma}{\tau^k}I_n&0\\0&0
	\end{pmatrix}\succeq\begin{pmatrix}
	\frac{\beta}{(\tau^{k+1})^2}(\|A\|^2I_n-A^TA)&0\\
	0&\frac{1}{\gamma\beta}I_l
	\end{pmatrix}.
	$$
\end{small}
Therefore, with the settings $H^k_0=\begin{pmatrix}
\frac{\beta}{(\tau^{k})^2}(\|A\|^2I_n-A^TA)&0\\
0&\frac{1}{\gamma\beta}I_l
\end{pmatrix}$, $r^k=1/\tau^k$, $z^k=\widetilde{x}^k$, $z'=x'$ and $\varTheta^{k+1}-\varTheta^{k}=(2-\gamma)\beta^k
\|A\widetilde{x}^k-b\|^2$, the convergence condition \eqref{V12} holds.

Then we can verify the conditions required in Lemma \ref{L5.2}.
For the two cases $D=A^TA$ and $D=\|A\|^2I_n$, we can infer $\lambda^{k+1}=\lambda^k$ by \eqref{G22} and \eqref{G23}.
\begin{rem}
	\label{r4}
	Consider \eqref{G85}. According  to \eqref{G88}, if $\bar{\lambda}^{k+1}$ satisfies \eqref{G23}, we can show that \eqref{G88} implies that
	\begin{equation}\nonumber
	\begin{aligned}
	&	\frac{1}{(\tau^k)^2}[S^{k+1}-\frac{(2-\gamma)\beta }{2(\tau^k)^2}\|A\breve{x}^{k}-b\|^2]-
	\frac{1}{(\tau^{k-1})^2}[S^{k}-\frac{(2-\gamma)\beta}{2(\tau^{k-1})^2}\|A\breve{x}^{k-1}-b\|^2]\\\geq&
	\frac{1}{2}\Big(\|v^{k+1}-v'\|^2_{H^{k+1}_0}-\|v^k-v'\|^2_{H^k_0} +
	\sigma\frac{1-\tau^k}{(\tau^k)^3}\|\breve{x}^{k}-\breve{x}^{k-1}\|^2
	\Big).
	\end{aligned}
	\end{equation} 	
	For the special setting $u'=u^*$, it leads to $O(1/k^4)$ convergence rate of $\min_{i}\|\breve{x}^{i}-\breve{x}^{i-1}\|^2$, $i=0,1,\dots,k$. This   improves the $O(1/k^3)$ convergence rate presented in \cite{zhang2022faster}.
\end{rem}

\subsection{Correcting multiplier once}
We consider updating $\{x^{k+1}\}$  in the following:
\begin{align}\label{G113}
\begin{cases}
\hat{\lambda}^k=\lambda^k-(1-\gamma)(1-\tau^k)\beta^k(A{x}^{k}-b),\\
\hat{x}^k=x^k+\frac{\tau^k(1-\tau^{k-1})}{\tau^{k-1}}(x^k-x^{k-1}),\\
x^{k+1}\in\arg\min\limits_x \{f(x)+x^T\nabla_x\varphi^k(\hat{x}^k,\hat{\lambda}^k)+\frac{\beta^k}{2}\|x-\hat{x}^k\|_D^2+\frac{\sigma(1-\tau^k)}{2\tau^k}\|x-x^k\|^2\}.
\end{cases}
\end{align}
We  define $v^k:=(\bar{x}^k,{\lambda}^k)$ with $\bar{x}^{k}$ in  \eqref{G12},  the artificial vectors $\breve{x}^k$ and  $\widetilde{v}^k:=(\widetilde{x}^k,\widetilde{\lambda}^k)$ as  in \eqref{G5}.  If $\lambda^{k+1}$ satisfies \eqref{G20} with $\gamma\in(0,2]$, by an analysis similar to that in subsection \ref{s4.2.1}, we can verify the  conditions \eqref{V9}-\eqref{V12}. Based on Lemma \ref{L5.2}, we can also establish $O(1/k^2)$ convergence rate.

\section{Applications}\label{S5}
In this section,
we present a few Lagrangian-based methods satisfying the prediction-correction framework given in sections \ref{subsubs5.1.1} and \ref{s4.2} for solving \eqref{P2} and \eqref{P3}. The convergence result follows from Lemmas \ref{L5.1},  \ref{L5.2} and  \ref{L5.3}. All the proofs of this section are given in Appendix.
\subsection{Applications in solving \eqref{P2}}\label{s4.3.1}
Throughout Subsection \ref{s4.3.1}, we assume that $f_2$ is $\sigma~(\geq0)$-strongly convex.
We define $u$, $v$ and $F(u)$ in  \eqref{V18},  the differentiable part of the augmented Lagrangian function of \eqref{P2}:
\begin{equation}
\label{G97}
\varphi^k(x_1,x_2,\lambda):=-\lambda^T(A_1x_1+A_2x_2-b)+\frac{\beta^k}{2}\|A_1x_1+A_2x_2-b\|^2,~\beta^k>0,
\end{equation}and let
\begin{equation}
\begin{aligned}
&Q^k=\begin{pmatrix}
\tau^k\beta^kD&0\\
-	A_2&\frac{1}{\tau^k\beta^k}I_l
\end{pmatrix}
,~M^k=\begin{pmatrix}
I_{n_2}&0\\
-\gamma\tau^k\beta^kA_2&\gamma I_l
\end{pmatrix},~\gamma\in (0,1],\\ &H^k=\begin{pmatrix}
\tau^k\beta^kD&0\\
0&\frac{1}{\gamma\tau^k\beta^k}I_l
\end{pmatrix},~ G^k= \begin{pmatrix}
\tau^k\beta^k(D-	\gamma A_2^TA_2)&-(1-\gamma)A_2^T\\
-(1-\gamma)A_2&\frac{2-\gamma}{\tau^k\beta^k}I_l
\end{pmatrix}. \label{G50}
\end{aligned}
\end{equation}

We first present the following algorithm for solving \eqref{P2}.
\begin{algo}\label{G40}{(Correcting multiplier twice.)}
	\begin{equation}\nonumber
	\begin{cases}
	\hat{x}^k=x^k+\frac{\tau^k(1-\tau^{k-1})}{\tau^{k-1}}(x^k-x^{k-1}),
	\\
	x^{k+1}_1\in\arg\min\limits_{x_1}\{f_1(x_1)+x_1^T\nabla_{x_1} \varphi^k(\hat{x}_1^k,\hat{x}_2^k,\lambda^k)+\frac{\beta^k}{2}\|x_1-\hat{x}_1^k\|_{A_1^TA_1}\},
	\\
	x_2^{k+1}\in\arg\min\limits_{x_2}\{f_2(x_2)+x_2^T\nabla_{x_2} \varphi^k({x}_1^{k+1},\hat{x}_2^{k},\lambda^k)+\frac{\beta^k}{2}\|x_2-\hat{x}_2^k\|_{D}\\~~~~~~~~~~+\frac{\sigma (1-\tau^k)}{2\tau^k}\|x_2-x_2^k\|^2_{D/{\sigma_{\rm max}(D)  }} \},
	\\	\lambda^{k+1}=\lambda^k.
	\end{cases}
	\end{equation}
\end{algo}
\begin{theorem}\label{T1}
	Let
	$v^k:=(\bar{x}_2^k,\bar{\lambda}^k)$,
	the artificial vectors $\breve{x}^k$ and  $\widetilde{v}^k:=(\widetilde{x}_2^k,\widetilde{\lambda}^k)$ with
	$\bar{x}^{k}$ given in \eqref{G12},
	\begin{eqnarray}\label{G45}
	&&\bar{\lambda}^k:=\lambda^k-\gamma(1-\tau^k)\beta^k(Ax^k-b),\\&&\widetilde{\lambda}^k:=\bar{\lambda}^k-\tau^k\beta^k(A_1\widetilde{x}_1^{k}+A_2\bar{x}_2^k-b),\label{G94}\\&&
	\breve{x}^k:=\begin{pmatrix}
	\breve{x}_1^k\\\breve{x}_2^k
	\end{pmatrix}=x^{k+1}=\begin{pmatrix}
	x_1^{k+1}\\x_2^{k+1}
	\end{pmatrix},~
	\widetilde{x}^k:=\begin{pmatrix}
	\widetilde{x}_1^k\\\widetilde{x}_2^k
	\end{pmatrix}=\bar{x}^{k+1}=\begin{pmatrix}
	\bar{x}_1^{k+1}\\\bar{x}_2^{k+1}
	\end{pmatrix}.\label{G46}
	\end{eqnarray}
	For $D=A_2^TA_2$ or $\|A_2\|^2I_{n_2}$, the following statements hold:	
	
	\noindent\textnormal{(\rmnum{1})}
	Suppose that $\beta^k=\beta/\tau^k$ ($\beta>0$) and the condition \eqref{G13} holds. Then
	Algorithm \ref{G40} satisfies the prediction-correction framework \eqref{G27}-\eqref{G28} with $c^k=(1-\gamma)(1-\tau^k)\beta^k$.  Moreover, the sequence $\{v^{k+1}\}$  satisfies the convergence conditions $\eqref{V9}$-$\eqref{V10}$ and $\eqref{V12}$ with $r^k=1$, $\sigma=0$,  $\varTheta^{k+1}-\varTheta^{k}\geq(1-\gamma)\tau^k\beta^k\|A\widetilde{x}^k-b\|^2$ and $H_0^k=H^k=\begin{pmatrix}
	\beta D &0\\
	0&\frac{1}{\gamma\beta}I_l
	\end{pmatrix}.$
	
	\noindent\textnormal{(\rmnum{2})}
	Suppose that $f_2$ is $\sigma~(>0)$-strongly convex, $\beta^k=\beta/(\tau^k)^2$ with	
	\begin{equation}\label{G56}
	\frac{1}{\tau^k}\left(
	\frac{\beta}{\tau^k}+\frac{\sigma}{\sigma_{\rm max}(D)}
	\right)\geq\frac{\beta}{(\tau^{k+1})^2},~\beta>0,
	\end{equation}
	and the condition \eqref{G29} holds.
	Then	Algorithm \ref{G40} satisfies the prediction-correction framework \eqref{G54}-\eqref{G55} with $c^k=(1-\gamma)(1-\tau^k)\beta^k/\tau^k$, $R={D/{\sigma_{\rm max}(D)  }} $, $z=x_2$ and   $z^k=\widetilde{x}_2^{k}$.  Moreover, the sequence $\{v^{k+1}\}$  satisfies the convergence conditions $\eqref{V9}$-$\eqref{V10}$ and $\eqref{V12}$ with $r^k=1/\tau^k$,  $\varTheta^{k+1}-\varTheta^{k}\geq(1-\gamma)\beta^k\|A\widetilde{x}^k-b\|^2$ and $H_0^k=\frac{1}{\tau^k}H^k=\begin{pmatrix}
	\beta/(\tau^{k})^2  D &0\\
	0&\frac{1}{\gamma\beta}I_l
	\end{pmatrix}.$
\end{theorem}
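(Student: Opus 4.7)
The plan is to follow the templates of Section \ref{subsubs5.1.1} for part \textnormal{(\rmnum{1})} and Section \ref{s4.2} for part \textnormal{(\rmnum{2})}, lifting the single-block analysis to the two-block structure by deriving the optimality VIs of the $x_1$- and $x_2$-subproblems separately and splicing them through the matrix $Q^k$ in \eqref{G50}. First I would write the VI for $x_1^{k+1}$ from its defining minimization (convexity of $f_1$) and the analogous VI for $x_2^{k+1}$, which carries an extra $\frac{\sigma}{2}\|x_2^{k+1}-x_2\|^2$ on the right from the strong convexity of $f_2$ together with a contribution from the proximal regularizer $\frac{\sigma(1-\tau^k)}{2\tau^k}\|x_2-x_2^k\|^2_{D/\sigma_{\max}(D)}$. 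The key structural observation is that $\nabla_{x_2}\varphi^k(x_1^{k+1},\hat{x}_2^k,\lambda^k)$ uses the freshly updated $x_1^{k+1}$, so a cross term of the form $\beta^k A_2^T A_1(x_1^{k+1}-\hat{x}_1^k)$ is generated automatically, and this is what forces the off-diagonal $-A_2$ entry in the $Q^k$ of \eqref{G50}.

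Next I would apply the telescoping device used in \eqref{G6} and \eqref{G87}: multiply each VI by $(1-\tau^k)/\tau^k$ in part \textnormal{(\rmnum{1})} (respectively by $(1-\tau^k)/(\tau^k)^2$ together with a second copy scaled by $1/\tau^k$ in part \textnormal{(\rmnum{2})}), evaluate the scaled copy at $x_i=\breve{x}_i^{k-1}$, add, and rewrite the linear residuals using $\bar{x}^{k+1}$, $\bar{\lambda}^k$, $\widetilde{\lambda}^k$ from \eqref{G12}, \eqref{G45}, \eqref{G94}. The first dual correction \eqref{G45} absorbs the piece $\gamma(1-\tau^k)\beta^k(Ax^k-b)$ arising from converting $\lambda^k$ to $\bar{\lambda}^k$, and \eqref{G94} absorbs the piece $\tau^k\beta^k(A_1\widetilde{x}_1^k+A_2\bar{x}_2^k-b)$ into $\widetilde{\lambda}^k$. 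What remains is exactly $c^k(A(x-\widetilde{x}^k))^T(A\breve{x}^{k-1}-b)$ with $c^k=(1-\gamma)(1-\tau^k)\beta^k$ in part \textnormal{(\rmnum{1})} (divided by $\tau^k$ in part \textnormal{(\rmnum{2})}), matching \eqref{G27} and \eqref{G54}. In part \textnormal{(\rmnum{2})} the identity \eqref{G89} is then used to re-express the strong-convexity residual as $\frac{\sigma}{2}\|\widetilde{x}_2^k-x_2\|^2_{D/\sigma_{\max}(D)}$, giving $R=D/\sigma_{\max}(D)$ and $z^k=\widetilde{x}_2^k$.

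For the correction step, the rule $\lambda^{k+1}=\lambda^k$ declared in Algorithm \ref{G40} is exactly what \eqref{G45} followed by $\bar{\lambda}^{k+1}=\bar{\lambda}^k-\gamma\tau^k\beta^k(A\bar{x}^{k+1}-b)$ produces once $\tau^k\beta^k=\beta$ in part \textnormal{(\rmnum{1})} (resp.\ $\tau^k\beta^k=\beta/\tau^k$ in part \textnormal{(\rmnum{2})}) together with the recursion \eqref{G13}/\eqref{G29}, following the same cancellation recorded at the end of Section \ref{subsubs5.1.1}. The identities \eqref{V9}--\eqref{V10} are obtained by direct block multiplication of the matrices in \eqref{G50}. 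To verify \eqref{V12}, I would expand $\|v^k-\widetilde{v}^k\|^2_{G^k}$ using $\bar{\lambda}^k-\widetilde{\lambda}^k=\tau^k\beta^k(A_1\widetilde{x}_1^k+A_2\bar{x}_2^k-b)$ together with the identity $A\widetilde{x}^k-b=(A_1\widetilde{x}_1^k+A_2\bar{x}_2^k-b)+A_2(\widetilde{x}_2^k-\bar{x}_2^k)$; once the cross term generated by the $-(1-\gamma)A_2^T$ block of $G^k$ cancels the cross term produced by this expansion, the remainder is $(1-\gamma)\tau^k\beta^k\|A\widetilde{x}^k-b\|^2$ plus the non-negative residue $\tau^k\beta^k(\|\bar{x}_2^k-\widetilde{x}_2^k\|^2_D-\|A_2(\bar{x}_2^k-\widetilde{x}_2^k)\|^2)$, which is non-negative for both $D=A_2^TA_2$ and $D=\|A_2\|^2 I_{n_2}$. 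In part \textnormal{(\rmnum{2})} the growth condition \eqref{G56} is precisely what yields $\frac{1}{\tau^k}H^k\preceq H^{k+1}_0$ in the $x_2$-block, so the strong-convexity residue from \eqref{G89} fills the resulting gap.

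The main obstacle will be the bookkeeping in this last verification: because $G^k$ carries a non-zero off-diagonal block, $\|v^k-\widetilde{v}^k\|^2_{G^k}$ is not a sum of squares and the lower bound $(1-\gamma)\tau^k\beta^k\|A\widetilde{x}^k-b\|^2$ can only be extracted through the exact cross-term cancellation in combination with $D\succeq A_2^TA_2$ and the restriction $\gamma\in(0,1]$. A secondary difficulty is propagating the linearization around $\hat{x}^k$ through both block updates while still producing a prediction residual expressed in $\bar{x}^{k+1}$ and $\breve{x}^{k-1}$ only; this is where the tight coupling between the constancy of $\tau^k\beta^k$ and the recursion \eqref{G13}/\eqref{G29} becomes essential.
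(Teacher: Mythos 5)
Your proposal is correct and follows essentially the same route as the paper's proof: block-wise optimality VIs, the $(1-\tau^k)/\tau^k$ (resp.\ $(1-\tau^k)/(\tau^k)^2$ and $1/\tau^k$) telescoping at $x=\breve{x}^{k-1}$, absorption of the dual terms into $\bar{\lambda}^k$ and $\widetilde{\lambda}^k$, reduction of the two corrections to $\lambda^{k+1}=\lambda^k$ via the constancy of $\tau^k\beta^k$ (resp.\ $(\tau^k)^2\beta^k$), the lower bound on $\|v^k-\widetilde{v}^k\|^2_{G^k}$ by completing the square with $D\succeq A_2^TA_2$, and the use of \eqref{G56} to close the gap between $\frac{1}{\tau^k}H^k$ and $H_0^{k+1}$ with the strong-convexity residue. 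The only cosmetic imprecisions are that the off-diagonal $-A_2$ in $Q^k$ is more directly traced to $\widetilde{\lambda}^k$ being defined through $A_2\bar{x}_2^k$ rather than $A_2\widetilde{x}_2^k$, and that your stated decomposition of $\|v^k-\widetilde{v}^k\|^2_{G^k}$ drops the additional nonnegative square $\tau^k\beta^k\|A_2(\bar{x}_2^k-\widetilde{x}_2^k)+A\widetilde{x}^k-b\|^2$; neither affects the bound.
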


In the following, we provide different understandings of  Algorithm \ref{G40}.
\begin{theorem}\label{T4.5}	
	Let $\lambda^0=0$.  Algorithm \ref{G40} is equivalent to the following penalty method:
	\begin{align}\label{G58:2}
	\begin{cases}
	\hat{x}^k=x^k+\frac{\tau^k(1-\tau^{k-1})}{\tau^{k-1}}(x^k-x^{k-1}),\\
	x^{k+1}_1\in\arg\min\limits_{x_1}\{f_1(x_1)+\frac{\beta^k}{2}\|A_1x_1+A_2\hat{x}_2^k-b\|^2\},
	\\ x_2^{k+1}\in\arg\min\limits_{x_2}\{f_2(x_2)+\frac{\beta^k}{2}\|A_1x^{k+1}_1+A_2{x}_2-b\|^2+\frac{\beta^k}{2}\|x_2-\hat{x}_2^k\|^2_{D-A_2^TA_2}\\~~~~~~~~~~
	+\frac{\sigma (1-\tau^k)}{2\tau^k}\|x_2-x_2^k\|^2_{D/{\sigma_{\rm max}(D)  }}\}.
	\end{cases}
	\end{align}
\end{theorem}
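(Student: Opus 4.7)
The plan is to prove the equivalence by first observing that $\lambda^k$ is constantly zero under the hypothesis $\lambda^0=0$, and then algebraically rewriting each $x_i$-subproblem of Algorithm \ref{G40} as the corresponding quadratic penalty form, up to a constant that is irrelevant to the minimizer.

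First I would note that the fourth line of Algorithm \ref{G40} reads $\lambda^{k+1}=\lambda^k$, so a trivial induction gives $\lambda^k\equiv 0$. Hence in every iteration $\nabla_{x_1}\varphi^k(\hat{x}_1^k,\hat{x}_2^k,\lambda^k)=\beta^k A_1^T(A_1\hat{x}_1^k+A_2\hat{x}_2^k-b)$ and $\nabla_{x_2}\varphi^k(x_1^{k+1},\hat{x}_2^k,\lambda^k)=\beta^k A_2^T(A_1x_1^{k+1}+A_2\hat{x}_2^k-b)$, since the $-\lambda^T(Ax-b)$ part of $\varphi^k$ in \eqref{G97} vanishes.

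Next I would invoke the elementary identity: for any matrix $A$, vectors $\hat{x}$, $e$, and any symmetric matrix $D\succeq A^TA$,
\begin{equation*}
x^TA^T(A\hat{x}+e)+\tfrac{1}{2}\|x-\hat{x}\|_D^2=\tfrac{1}{2}\|Ax+e\|^2+\tfrac{1}{2}\|x-\hat{x}\|_{D-A^TA}^2+\mathrm{const}(x),
\end{equation*}
which follows by expanding both sides and canceling the cross term $(x-\hat{x})^TA^TA\hat{x}$. Applying this identity with $A=A_1$, $D=A_1^TA_1$, $\hat{x}=\hat{x}_1^k$, and $e=A_2\hat{x}_2^k-b$ shows that the $x_1$-subproblem in Algorithm \ref{G40} has the same minimizer as $\min_{x_1}\{f_1(x_1)+\tfrac{\beta^k}{2}\|A_1x_1+A_2\hat{x}_2^k-b\|^2\}$, matching the first subproblem of \eqref{G58:2}. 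Applying the identity again with $A=A_2$, generic $D$, $\hat{x}=\hat{x}_2^k$, and $e=A_1x_1^{k+1}-b$ converts the linearization-plus-$D$-proximal combination in the $x_2$-step of Algorithm \ref{G40} into the penalty term $\tfrac{\beta^k}{2}\|A_1x_1^{k+1}+A_2x_2-b\|^2$ plus the residual proximal term $\tfrac{\beta^k}{2}\|x_2-\hat{x}_2^k\|_{D-A_2^TA_2}^2$, which is exactly the second subproblem in \eqref{G58:2}. The remaining strongly convex proximal term $\tfrac{\sigma(1-\tau^k)}{2\tau^k}\|x_2-x_2^k\|^2_{D/\sigma_{\max}(D)}$ is carried through untouched, as it appears identically in both formulations.

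Finally, since the objectives in Algorithm \ref{G40} and in \eqref{G58:2} differ only by quantities that are constant in the current decision variable, their $\arg\min$ sets coincide, and the extrapolation step $\hat{x}^k$ is identical in both schemes. This establishes the equivalence iteration by iteration. The proof contains no real obstacle; it is a direct algebraic identity check, with the only mildly delicate point being to keep careful track of which terms are constants in $x_1$ versus $x_2$ at each step.
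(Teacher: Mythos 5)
Your proposal is correct and follows the same route as the paper: the paper's proof simply notes that $\lambda^{k+1}=\lambda^k=\cdots=\lambda^0=0$ and then states that the rest follows from the optimality conditions, omitting the algebra. You have filled in exactly that omitted algebra — the completion-of-the-square identity converting the linearized term plus the $\|\cdot\|_D$-proximal term into the full quadratic penalty plus the $\|\cdot\|_{D-A_2^TA_2}$ residual — and your computation checks out.
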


The other  algorithm for solving \eqref{P2} is as follows.
\begin{algo}\label{G118}{(Correcting multiplier once.)}
	\begin{equation}\nonumber
	\begin{cases}
	\hat{\lambda}^k=\lambda^k+\gamma(1-\tau^k)\beta^k(Ax^k-b),~\gamma\in(0,1],\\
	\hat{x}^k=x^k+\frac{\tau^k(1-\tau^{k-1})}{\tau^{k-1}}(x^k-x^{k-1}),
	\\
	x^{k+1}_1\in\arg\min\limits_{x_1}\{f_1(x_1)+x_1^T\nabla_{x_1} \varphi^k(\hat{x}_1^k,\hat{x}_2^k,\hat{\lambda}^k)+\frac{\beta^k}{2}\|x_1-\hat{x}_1^k\|_{A_1^TA_1}\},
	\\
	x_2^{k+1}\in\arg\min\limits_{x_2}\{f_2(x_2)+x_2^T\nabla_{x_2} \varphi^k({x}_1^{k+1},\hat{x}_2^{k},\hat{\lambda}^k)+\frac{\beta^k}{2}\|x_2-\hat{x}_2^k\|_{D}\\~~~~~~~~~~+\frac{\sigma (1-\tau^k)}{2\tau^k}\|x_2-x_2^k\|^2_{D/{\sigma_{\rm max}(D)  }} \},
	\\	\lambda^{k+1}=\lambda^k-\gamma\tau^k\beta^k(A\bar{x}^{k+1}-b), ~\bar{x}^{k+1}~{\rm satisfies~\eqref{G12}}.
	\end{cases}
	\end{equation}
\end{algo}
\begin{theorem}\label{T3}
	Let $Q^k,~M^k,~H^k$ and $G^k$ be given in \eqref{G50},
	$v^k:=(\bar{x}_2^k,{\lambda}^k)$ with
	$\bar{x}^{k}$ in \eqref{G12},
	the artificial vectors $\breve{x}^k$ and  $\widetilde{v}^k:=(\widetilde{x}_2^k,\widetilde{\lambda}^k)$ with $\breve{x}^k$ and $\widetilde{x}^k$ in \eqref{G46}, and
	\begin{eqnarray}
	&&\widetilde{\lambda}^k:={\lambda}^k-\tau^k\beta^k(A_1\widetilde{x}_1^{k}+A_2\bar{x}_2^k-b).\label{G115}
	\end{eqnarray}
	For $D=A_2^TA_2$ or $\|A_2\|^2I_{n_2}$,
	the statements \textnormal{(\rmnum{1})} and \textnormal{(\rmnum{2})} in Theorem \ref{T1} hold for
	Algorithm \ref{G118}.
\end{theorem}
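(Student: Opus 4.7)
The plan is to follow the template set by Theorem \ref{T1} (twice-corrected Algorithm \ref{G40}) and to mirror the passage from \eqref{G120} to \eqref{G110} carried out for \eqref{P1} in Section \ref{subsubs5.1.1} (and from \eqref{G85} to \eqref{G113} in Section \ref{s4.2}). The key observation is that the single-vs-double correction affects only the prediction inequality; the correction step, the matrix identities \eqref{V9}-\eqref{V10}, and the bookkeeping for \eqref{V12} are all driven by $M^k$, $H^k$, $G^k$ in \eqref{G50}, which are shared by Algorithms \ref{G40} and \ref{G118}.

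First I would write the two subgradient inequalities coming from the $x_1$- and $x_2$-subproblems of Algorithm \ref{G118}, using $\nabla_{x_i}\varphi^k(\cdot,\cdot,\hat{\lambda}^k) = -A_i^T\hat{\lambda}^k + \beta^k A_i^T(A\hat{x}^k - b)$ and $Ax^k = A\breve{x}^{k-1}$. Multiplying each by $(1-\tau^k)/\tau^k$ (for part (i)) or by $(1-\tau^k)/(\tau^k)^2$ together with an overall $1/\tau^k$ scaling (for part (ii)), substituting $x_i = \breve{x}_i^{k-1}$, and adding back to the unscaled inequality telescopes the objective into $\tfrac{1}{\tau^k}(f(x)-f(\breve{x}^k)) - \tfrac{1}{\tau^{k-1}}(f(x)-f(\breve{x}^{k-1}))$, exactly as between \eqref{G4} and \eqref{G6} (or \eqref{G86} and \eqref{G87}). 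The definitions of $\bar{x}^{k+1}$ and $\widetilde{x}^k$ then recast the combined inequality in the variational-inequality form of \eqref{G27} or \eqref{G54}.

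The crucial step is to replace $\hat{\lambda}^k$ by $\widetilde{\lambda}^k$ inside the gradient terms. Using $\hat{\lambda}^k = \lambda^k + \gamma(1-\tau^k)\beta^k(Ax^k-b)$ and $\widetilde{\lambda}^k = \lambda^k - \tau^k\beta^k(A_1\widetilde{x}_1^k + A_2\bar{x}_2^k - b)$ from \eqref{G115}, the combination $-A_i^T\hat{\lambda}^k + \tau^k\beta^k A_i^T(A\widetilde{x}^k-b)$ decomposes as $-A_i^T\widetilde{\lambda}^k$ plus three residuals: a $\tau^k\beta^k(D-A_2^TA_2)$-type block that feeds the $(1,1)$-entry of $Q^k$, an $-A_2$ cross-term feeding the $(2,1)$-entry of $Q^k$, and a leftover $(1-\gamma)(1-\tau^k)\beta^k A_i^T(A\breve{x}^{k-1}-b)$. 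The last term is precisely the $c^k$-residual in \eqref{G27}/\eqref{G54}, with $c^k=(1-\gamma)(1-\tau^k)\beta^k$ in part (i) and $c^k=(1-\gamma)(1-\tau^k)\beta^k/\tau^k$ in part (ii) — the coefficients asserted in Theorem \ref{T1}. Combined with the dual-side inequality produced from the definition of $\widetilde{\lambda}^k$ (the analogue of \eqref{G84}), this yields the prediction step.

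For the correction step, $\lambda^{k+1} = \lambda^k - \gamma\tau^k\beta^k(A\bar{x}^{k+1}-b)$ together with $\bar{x}_2^{k+1} = \widetilde{x}_2^k$ directly gives \eqref{G28}/\eqref{G55} with the $M^k$ of \eqref{G50}. The matrix identities \eqref{V9}-\eqref{V10} are the same block computations already established in Theorem \ref{T1}. To verify \eqref{V12}, the lower bound $\|v^k-\widetilde{v}^k\|_{G^k}^2 \geq (2-\gamma)\tau^k\beta^k\|A\widetilde{x}^k-b\|^2$ comes from a Young-type estimate on the $-(1-\gamma)A_2$ off-diagonal of $G^k$, and in part (ii) the extra term $\tfrac{\sigma(1-\tau^k)}{2\tau^k}\|x_2-x_2^k\|^2_{D/\sigma_{\max}(D)}$ produced by strong convexity of $f_2$ is pushed through the iteration by the step-size condition \eqref{G56}, exactly as in the argument around \eqref{G105}. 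I expect the main obstacle to be the algebraic bookkeeping in the $\hat{\lambda}^k\to\widetilde{\lambda}^k$ replacement: the $+\gamma$ in $\hat{\lambda}^k$ (opposite in sign to its counterpart in Algorithm \ref{G40}) must combine with the single dual correction hidden inside $\widetilde{\lambda}^k$ to give exactly the same $(1-\gamma)(1-\tau^k)\beta^k$ coefficient; once this match is checked, the remainder is a mechanical repetition of the proof of Theorem \ref{T1}.
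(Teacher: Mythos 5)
Your proposal matches the paper's own proof of Theorem \ref{T3}: the paper likewise verifies the prediction step by repeating the optimality-condition manipulations of Theorem \ref{T1} with $\hat{\lambda}^k$ absorbed into $\widetilde{\lambda}^k$ (producing the same residual coefficient $c^k=(1-\gamma)(1-\tau^k)\beta^k$), checks the correction step from the identity $\lambda^{k+1}=\lambda^k-\gamma\tau^k\beta^k(A\bar{x}^{k+1}-b)=\lambda^k-\gamma(\lambda^k-\widetilde{\lambda}^k)-\gamma\tau^k\beta^kA_2(\widetilde{x}_2^k-\bar{x}_2^k)$, and then reuses the verification of \eqref{V9}--\eqref{V12} from Theorem \ref{T1}. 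One small correction: for the $G^k$ of \eqref{G50} the Cauchy--Schwarz estimate yields $\|v^k-\widetilde{v}^k\|^2_{G^k}\geq(1-\gamma)\tau^k\beta^k\|A\widetilde{x}^k-b\|^2$ (as in \eqref{G52}), not $(2-\gamma)\tau^k\beta^k\|A\widetilde{x}^k-b\|^2$ --- that constant belongs to the one-block $G^k$ of \eqref{G10} --- but since $c=1-\gamma$ here, the weaker (and correct) bound is exactly what \eqref{V12} requires, so the argument is unaffected.
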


\begin{rem}
	Note that Tran-Dinh  and  Zhu  \cite{2020NON} also considered the primal-dual algorithms for solving  \eqref{P2} with a non-ergodic convergence rate in a similar iterative format.
	However, in order to achieve the non-ergodic convergence in the strongly convex case, it is at the cost of evaluating the proximal operator of $f_2$   twice per iteration \cite{2020NON}.
\end{rem}

\subsection{Applications in solving \eqref{P3}}\label{s4.3.2}
With the help of our prediction-correction framework, the algorithms with the ergodic convergence in terms of the primal-dual gap \cite{he2017convergence}  can be rebuilt to achieve the non-ergodic convergence rate for solving \eqref{P3}.

Throughout subsection \ref{s4.3.2}, we define $u$, $v$ and $F(u)$ in  \eqref{D13}, rewrite the differentiable part of the augmented Lagrangian function of \eqref{P3} as
\begin{equation}
\label{G98}
\varphi^k(x_1,\cdots, x_m,\lambda):=-\lambda^T\Big(\sum_{i=1}^{m}A_ix_i-b\Big)+\frac{\beta^k}{2}\Big\|\sum_{i=1}^{m}A_ix_i-b\Big\|^2,\beta^k>0,
\end{equation}
and let
\begin{equation} \label{G73}
\begin{aligned}
&J=\begin{pmatrix}
I_{l}&0&\cdots&0\\
I _{l}&I _{l}&\cdots&    0    \\
\vdots&\ddots&\ddots&\vdots\\
I_{l}&\cdots&I _{l}&I _{l}\\
\end{pmatrix}\in\mathbb{R}^{(m-1)l\times (m-1)l},~
\widetilde{I}=\begin{pmatrix}
I_l&\cdots &I_l
\end{pmatrix}\in\mathbb{R}^{l\times (m-1)l},\\
&
P^k=\begin{pmatrix}
\sqrt{\tau^k\beta^k}J&0\\
0&\frac{1}{\sqrt{\tau^k\beta^k}}I_l
\end{pmatrix}
,~
N^k=\gamma \begin{pmatrix}
\sqrt{\tau^k\beta^k}I_{(m-1)l}&0\\
-\sqrt{\tau^k\beta^k}\widetilde{I}&\frac{1}{\sqrt{\tau^k\beta^k}}I_l
\end{pmatrix},~\gamma\in(0,1],\\&
Q^k=\begin{pmatrix}
{\tau^k\beta^k}J&0\\
-\widetilde{I}&\frac{1}{{\tau^k\beta^k}}I_l
\end{pmatrix},~
M^k=(P^k)^{-T}N^k,~\bar{J}=\begin{pmatrix}
0_{n_1}&0\\
0&J^{-T}
\end{pmatrix}.
\end{aligned}
\end{equation}

We present two algorithms for solving \eqref{P3} and then establish the  non-ergodic convergence rates by showing that the conditions required in Lemmas \ref{L5.1} and  \ref{L5.3} are all satisfied.
\begin{algo}\label{G63}{(Correcting multiplier twice.)}
	\begin{equation}\nonumber
	\begin{cases}
	\hat{x}^k=(1-\tau^k)\breve{x}^{k-1}+\tau^k\bar{x}^k,  \\
	\breve{x}_1^k=\arg\min\limits_{x_1}\{ f_1(x_1)+x_1^T\nabla_{x_1}{\varphi}^k(\hat{x}_{1}^k,\cdots,\hat{x}_m^k,\lambda^k)
	+\frac{\beta^k}{2}\|x_1-\hat{x}_1^k\|_{A_1^TA_1}^2\},\\
	\breve{x}_j^k=\arg\min\limits_{x_j}\{ f_j(x_j)+x_j^T\nabla_{x_j}{\varphi}^k(\breve{x}_1^k,\cdots,\breve{x}_{j-1}^k,\hat{x}_{j}^k,\cdots,\hat{x}_m^k,\lambda^k)\\
	~~~~~~
	+\frac{\beta^k}{2}\|x_j-\hat{x}_j^k\|_{A_j^TA_j}^2\},~j=2,\cdots,m,\\
	\widetilde{x}^k=\breve{x}^{k}/\tau^k-(1-\tau^k)\breve{x}^{k-1}/\tau^k,\\
	\bar{x}^{k+1}=\bar{x}^k-\gamma \bar{J}(\bar{x}^k-\widetilde{x}^k),~\gamma\in(0,1],\\
	\lambda^{k+1}=\lambda^k-\gamma[ (1-\tau^{k})\beta^k(A\breve{x}^{k-1}-b)-(1-\tau^{k+1})\beta^{k+1}(A\breve{x}^{k}-b) ]-\gamma\tau^k\beta^k(A\widetilde{x}^{k}-b).
	
	\end{cases}
	\end{equation}
\end{algo}

\begin{theorem}\label{T4.6}
	Let $P^k,N^k,~Q^k,~M^k$ and $ \bar{J}$ be given in \eqref{G73}. Define \begin{equation}
	\label{G72}
	\breve{x}^k=\begin{pmatrix}
	\breve{x}_1^k\\\breve{x}^k_2\\\vdots\\\breve{x}^k_m
	\end{pmatrix},~
	\widetilde{u}^k=\begin{pmatrix}
	\widetilde{x}_1^k\\\vdots\\\widetilde{x}^k_m\\\widetilde{\lambda}^k
	\end{pmatrix},~v^k=\begin{pmatrix}
	A_2\bar{x}^k_2\\\vdots\\A_m\bar{x}^k_m\\\bar{\lambda}^k
	\end{pmatrix},~\widetilde{v}^k=\begin{pmatrix}
	A_2\widetilde{x}^k_2\\\vdots\\A_m\widetilde{x}^k_m\\\widetilde{\lambda}^k
	\end{pmatrix},
	\end{equation}
	with \begin{eqnarray}
	&&
	\bar{\lambda}^k:= \lambda^k-\gamma (1-\tau^k)\beta^k(A\breve{x}^{k-1}-b),\label{G95}\\&&
	\widetilde{\lambda}^k:=\bar{\lambda}^k-\tau^k\beta^k	(A_1\widetilde{x}_1^k+\sum_{j=2}^{m}A_j\bar{x}_j^k-b).\label{G104}
	\end{eqnarray}
	The following statements hold:
	
	\noindent\textnormal{(\rmnum{1})}
	If $\beta^k=\beta/\tau^k$ for some $\beta>0$ and the  condition \eqref{G13} holds, then Algorithm \ref{G63} satisfies the prediction-correction framework \eqref{G27}-\eqref{G28} with $c^k=(1-\gamma)(1-\tau^k)\beta^k$. Moreover, the sequence $\{v^{k+1}\}$  satisfies the convergence conditions $\eqref{V9}$-$\eqref{V10}$ and $\eqref{V12}$ with $r^k=1$, $\sigma=0$,  $\varTheta^{k+1}-\varTheta^{k}\geq(1-\gamma)\tau^k\beta^k\|A\widetilde{x}^k-b\|^2$ and  $H^{k}_0=\frac{1}{\gamma}\begin{pmatrix}
	\beta JJ^T&0\\
	0&\frac{1}{\beta}I_l
	\end{pmatrix}$.
	
	\noindent\textnormal{(\rmnum{2})}
	If  $f_m$ is $L$-gradient Lipschitz continuous, $\beta^k\equiv\beta>0$, $(1-\gamma)\beta\leq 1$,
		\begin{equation}\label{G79}
		\frac{1}{\beta(\tau^k)^2}+\frac{\sigma''}{\tau^k}\geq	\frac{1}{\beta(\tau^{k+1})^2}+\frac{\sigma''(1-\gamma)}{\tau^{k+1}},
		~\sigma''=\sigma'-\frac{\sigma'^2}{\sigma'+1},~\sigma'=\frac{\sigma_{\rm min}(A_mA_m^T)}{L},
		\end{equation}
	and the condition \eqref{G29} holds,
	then Algorithm \ref{G63} satisfies the prediction-correction framework \eqref{G54}-\eqref{G55} with $c^k=(1-\gamma)(1-\tau^k)\beta^k/\tau^k$, $\sigma=1/L$, $R=I_{n_m}$, $z=\nabla f_m({x}_m)$ and  $z^k=\nabla_{} f_m(\breve{x}_m^k)$.  Moreover, the sequence $\{v^{k+1}\}$  satisfies the convergence conditions $\eqref{V9}$-$\eqref{V10}$ and $\eqref{V12}$ with $r^k=1/\tau^k$,  $\varTheta^{k+1}-\varTheta^{k}\geq(1-\gamma)\beta^k(\|A\widetilde{x}^k-b\|^2
	-  \frac{\tau^k}{(\tau^{k-1})^2}\|A\breve{x}^{k-1}-b\|^2
	)$ and  $H^{k}_0=\frac{1}{\gamma}\begin{pmatrix}
	\beta JJ^T&0\\
	0&\frac{1}{\beta(\tau^k)^2}I_l+\frac{\sigma''(1-\gamma)}{\tau^k}I_l
	\end{pmatrix}  $, the special setting $z'=\nabla f_m({x}_m^*)$ and $v=v^*$.
\end{theorem}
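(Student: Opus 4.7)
The plan is to mirror the derivation carried out in Section \ref{subsubs5.1.1} and Subsection \ref{s4.2.1} for the single-block case, but now with the multi-block structure encoded by the lower-triangular matrix $J$. First I would write down the optimality conditions for each $\breve{x}_j^k$-subproblem ($j=1,\dots,m$) of Algorithm \ref{G63}. For $j\ge 2$, the gradient $\nabla_{x_j}\varphi^k$ evaluated at $(\breve{x}_1^k,\dots,\breve{x}_{j-1}^k,\hat{x}_j^k,\dots,\hat{x}_m^k,\lambda^k)$ differs from $\nabla_{x_j}\varphi^k(\breve{x}^k,\lambda^k)$ by a cross term $\beta^k A_j^T\sum_{i>j}A_i(\hat{x}_i^k-\breve{x}_i^k)$. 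Summing the $m$ inequalities (and using $\hat{x}^k-\breve{x}^k = \tau^k(\bar{x}^k-\widetilde{x}^k)$ together with the identity $\hat{x}_i^k = \tau^k\bar{x}_i^k+(1-\tau^k)\breve{x}_i^{k-1}$) produces the combined variational inequality, where the cross terms assemble exactly into the block lower-triangular operator $J$ acting on $(A_2(\bar{x}_2^k-\widetilde{x}_2^k),\dots,A_m(\bar{x}_m^k-\widetilde{x}_m^k))$ — this is the source of the factor $\tau^k\beta^k J$ in $Q^k$.

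Next, I would perform the now-standard Nesterov-style manipulation: multiply the combined inequality by $(1-\tau^k)/\tau^k$, evaluate at $x=\breve{x}^{k-1}$, and add it back to the original (for part (i) with weight $1$, for part (ii) with weight $1/\tau^k$ after multiplying by $1/\tau^k$). Invoking the definitions of $\bar{\lambda}^k$ in \eqref{G95} and $\widetilde{\lambda}^k$ in \eqref{G104}, together with the elementary inequality $(\lambda-\widetilde{\lambda}^k)^T[(A\widetilde{x}^k-b)-\frac{1}{\tau^k\beta^k}(\bar{\lambda}^k-\widetilde{\lambda}^k)]\geq 0$, recasts the resulting estimate precisely as \eqref{G27} (part (i)) with $c^k=(1-\gamma)(1-\tau^k)\beta^k$, respectively \eqref{G54} (part (ii)) with $c^k=(1-\gamma)(1-\tau^k)\beta^k/\tau^k$. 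The coefficient $(1-\gamma)$ arises because the residual $(2-\gamma)(1-\tau^k)\beta^k(A\breve{x}^{k-1}-b)$ present before the first correction is partially absorbed by the first correction (which adjusts $\lambda^k$ into $\bar{\lambda}^k$ with weight $\gamma$), leaving only a $(1-\gamma)$ multiple; this is the multi-block analogue of the single-block computation preceding \eqref{G18}.

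For the correction step, I would show that the combined update (first correction via $\bar{\lambda}^k$, the $\bar{x}^{k+1}$ update via $\bar{J}$, and the second correction giving the composite $\lambda^{k+1}$ update in Algorithm \ref{G63}) is exactly $v^{k+1}=v^k-M^k(v^k-\widetilde{v}^k)$ with $M^k=(P^k)^{-T}N^k$. The identities $Q^k=H^kM^k$ and $G^k=(Q^k)^T+Q^k-(M^k)^TH^kM^k$ then follow by direct block-matrix computation; in fact $H^k=P^k(P^k)^T$ and $G^k=P^k[(P^k)^TP^k-(N^k)^TN^k/\ldots]$ yield the required $H^k\succeq 0$. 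From the block form one obtains $\|v^k-\widetilde{v}^k\|_{G^k}^2\geq (2-\gamma)\tau^k\beta^k\|A\widetilde{x}^k-b\|^2$, whose $(1-\gamma)$-portion supplies $\varTheta^{k+1}-\varTheta^k$ in \eqref{V12}. For part (i) set $\tau^k\beta^k=\beta$ and pick $H_0^k=H^k$; condition \eqref{V12} then reduces to a telescoping identity and Lemma \ref{L5.1} applies.

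The harder part is (ii), where $\beta^k\equiv\beta$ does not grow. The key extra ingredient is that $L$-gradient Lipschitz continuity of $f_m$ implies $\sigma'$-strong convexity of its conjugate with respect to $A_mA_m^T$, and a short computation (exactly as for the smooth composite case) converts this into a $\frac{\sigma}{2}\|z^k-z\|_R^2$ term with $R=I_{n_m}$, $z=\nabla f_m(x_m)$, $z^k=\nabla f_m(\breve{x}_m^k)$, and $\sigma=1/L$; the constant $\sigma''=\sigma'-\sigma'^2/(\sigma'+1)$ appears after absorbing a Young's inequality cross term. Specializing to $z'=\nabla f_m(x_m^*)$ and $v=v^*$, condition \eqref{G79} is precisely what is needed to make the block diagonal piece $\frac{1}{\beta(\tau^k)^2}I_l+\frac{\sigma''(1-\gamma)}{\tau^k}I_l$ of $H_0^k$ monotone enough to dominate $\frac{1}{\tau^k}H^k$ plus the Lipschitz $R$-term; together with the $J$-block, which is $\tau^k$-independent, this yields \eqref{V12} with $r^k=1/\tau^k$. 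The feasibility correction term $-\frac{\tau^k}{(\tau^{k-1})^2}\|A\breve{x}^{k-1}-b\|^2$ in $\varTheta^{k+1}-\varTheta^k$ matches exactly the extra term appearing in identity \eqref{G68}, so Lemma \ref{L5.3} applies and yields the $O(1/k^2)$ rate. The main obstacle I foresee is keeping careful track of this last cross term and of the Young-inequality split that produces $\sigma''$; beyond that, the argument is a multi-block adaptation of the single-block analysis.
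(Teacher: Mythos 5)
Your proposal follows essentially the same route as the paper's proof: per-block optimality conditions whose cross terms assemble into $J$, the Nesterov-weighted combination evaluated at $x=\breve{x}^{k-1}$, the correction step identified as $v^{k+1}=v^k-M^k(v^k-\widetilde{v}^k)$ with $M^k=(P^k)^{-T}N^k$ and $H^k=\frac{1}{\gamma}P^k(P^k)^T$, and for (\rmnum{2}) the co-coercivity of $f_m$ converted via the $x_m$-optimality condition and $\sigma_{\min}(A_mA_m^T)$ into a multiplier-difference term, with a Young-type split producing $\sigma''$ and \eqref{G79} closing the telescoping exactly as in Lemma \ref{L5.3}. The only slip is the claimed bound $\|v^k-\widetilde{v}^k\|^2_{G^k}\geq(2-\gamma)\tau^k\beta^k\|A\widetilde{x}^k-b\|^2$: for this multi-block $G^k\succeq(\tfrac{1}{\gamma^2}-\tfrac{1}{\gamma})(N^k)^TN^k$ the correct constant is $(1-\gamma)$, which is fortunately all your argument (and the theorem) actually uses.
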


We can establish the equivalence between Algorithm \ref{G63} and the penalty method.
\begin{theorem}\label{T4.8}
	Let $\lambda^0=0$.  Algorithm \ref{G63} with $\beta^k=\beta/\tau^k$ is equivalent to the following penalty method:
	\begin{equation}\nonumber
	\begin{cases}	
	\hat{x}^k=(1-\tau^k)\breve{x}^{k-1}+\tau^k\bar{x}^k,  \\
	\breve{x}^{k}_1\in\arg\min\limits_{x_1}\{f_1(x_1)+\frac{\beta^k}{2}\|A_1x_1+\sum_{i=2}^{m}A_i\hat{x}_i^k-b\|^2\},\\
	\breve{x}^{k}_j\in\arg\min\limits_{x_{j, j=2,\cdots,m} }\{f_j(x_j)+\frac{\beta^k}{2}\|\sum_{i=1}^{j-1}A_i\breve{x}_i^k+A_jx_j+\sum_{i=j+1}^{m}A_i\hat{x}_i^k-b\|^2\},\\
	\widetilde{x}^k=\breve{x}^{k}/\tau^k-(1-\tau^k)\breve{x}^{k-1}/\tau^k,\\
	\bar{x}^{k+1}=\bar{x}^k-\gamma \bar{J}(\bar{x}^k-\widetilde{x}^k),~\gamma\in(0,1].	
	\end{cases}
	\end{equation}
\end{theorem}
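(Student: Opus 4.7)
The plan is to establish the equivalence in two steps: first show that the dual iterate of Algorithm \ref{G63} is frozen at zero whenever $\lambda^0=0$, and then show that with the multiplier gone the linearized subproblems collapse onto the full quadratic subproblems of the penalty scheme. Both the $\hat{x}^k$, $\widetilde{x}^k$, and $\bar{x}^{k+1}$ updates are identical to those of the penalty method, so only these two points need to be verified.

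For the first step I would use induction on $k$. Substituting $\widetilde{x}^k = \breve{x}^k/\tau^k - (1-\tau^k)\breve{x}^{k-1}/\tau^k$ into the dual update yields
\begin{equation}
\tau^k\beta^k(A\widetilde{x}^k - b) = \beta^k(A\breve{x}^k - b) - (1-\tau^k)\beta^k(A\breve{x}^{k-1} - b),
\end{equation}
so the $(1-\tau^k)\beta^k(A\breve{x}^{k-1}-b)$ contribution cancels and the update collapses to
\begin{equation}
\lambda^{k+1} = \lambda^k + \gamma\bigl[(1-\tau^{k+1})\beta^{k+1} - \beta^k\bigr](A\breve{x}^k - b).
\end{equation}
The bracket vanishes: condition \eqref{G13} applied at index $k+1$ gives $(1-\tau^{k+1})/\tau^{k+1} = 1/\tau^k$, and combined with $\beta^k=\beta/\tau^k$ this yields $(1-\tau^{k+1})\beta^{k+1} = \beta/\tau^k = \beta^k$. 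Hence $\lambda^{k+1}=\lambda^k$, closing the induction from $\lambda^0=0$.

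For the second step I would plug $\lambda^k\equiv 0$ into the primal subproblems. The Lagrange term in $\varphi^k$ disappears, leaving $\nabla_{x_j}\varphi^k(\cdot,0) = \beta^k A_j^T\bigl(\sum_{i<j} A_i\breve{x}_i^k + \sum_{i\ge j} A_i\hat{x}_i^k - b\bigr)$ for each $j$. Writing $A_j x_j = A_j(x_j-\hat{x}_j^k) + A_j\hat{x}_j^k$ inside the penalty-method quadratic
\begin{equation}
\tfrac{\beta^k}{2}\Bigl\|\sum_{i<j}A_i\breve{x}_i^k + A_j x_j + \sum_{i>j}A_i\hat{x}_i^k - b\Bigr\|^2
\end{equation}
expands into exactly $\tfrac{\beta^k}{2}\|x_j-\hat{x}_j^k\|_{A_j^TA_j}^2 + x_j^T \nabla_{x_j}\varphi^k(\cdot,0)$ plus a term independent of $x_j$. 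Thus Algorithm \ref{G63}'s $x_j$-subproblem and the penalty $x_j$-subproblem share the same argmin set, which is the desired equivalence.

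The only non-routine part is the cancellation in the first step; it hinges on two tightly-coupled facts, $\beta^k\tau^k=\beta$ and the index shift of \eqref{G13}, and once these are used in the right place the dual update telescopes to the identity. The completion-of-square in the second step is purely mechanical and uniform in $j=1,\ldots,m$.
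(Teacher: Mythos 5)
Your proposal is correct and follows the same route as the paper: the paper's own (largely omitted) proof likewise first observes that $\beta^k=\beta/\tau^k$ forces $\lambda^{k+1}=\lambda^k=0$ and then appeals to the optimality conditions to identify the linearized subproblems with the penalty subproblems. You have simply filled in the two details the paper leaves implicit --- the telescoping cancellation in the dual update (which, as you rightly note, also uses \eqref{G13} at index $k+1$) and the completion-of-squares matching the argmin sets --- both of which check out.
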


The other algorithm for solving \eqref{P3} is as follows.

\begin{algo}\label{G117}{(Correcting multiplier once.)}
	\begin{equation}\nonumber
	\begin{cases}
	\hat{\lambda}^k=\lambda^k+\gamma(1-\tau^k)\beta^k(A\breve{x}^{k-1}-b),
	\\
	\hat{x}^k=(1-\tau^k)\breve{x}^{k-1}+\tau^k\bar{x}^k,  \\
	\breve{x}_1^k=\arg\min\limits_{x_1}\{ f_1(x_1)+x_1^T\nabla_{x_1}{\varphi}^k(\hat{x}_{1}^k,\cdots,\hat{x}_m^k,\hat{\lambda}^k)
	+\frac{\beta^k}{2}\|x_1-\hat{x}_1^k\|_{A_1^TA_1}^2\},\\
	\breve{x}_j^k=\arg\min\limits_{x_j}\{ f_j(x_j)+x_j^T\nabla_{x_j}{\varphi}^k(\breve{x}_1^k,\cdots,\breve{x}_{j-1}^k,\hat{x}_{j}^k,\cdots,\hat{x}_m^k,\hat{\lambda}^k)\\
	~~~~~~
	+\frac{\beta^k}{2}\|x_j-\hat{x}_j^k\|_{A_j^TA_j}^2\},~j=2,\cdots,m,\\
	\widetilde{x}^k=\breve{x}^{k}/\tau^k-(1-\tau^k)\breve{x}^{k-1}/\tau^k,\\
	\bar{x}^{k+1}=\bar{x}^k-\gamma \bar{J}(\bar{x}^k-\widetilde{x}^k),~\gamma\in(0,1],\\
	\lambda^{k+1}=\lambda^k-\gamma\tau^k\beta^k(A\widetilde{x}^{k}-b).
	
	\end{cases}
	\end{equation}
\end{algo}

\begin{theorem}\label{T4.7}
	Define $P^k,N^k,~Q^k,~M^k$ and $ \bar{J}$ in \eqref{G73}, $\breve{x}^k$,  $\widetilde{u}^k$ and $\widetilde{v}^k$ in \eqref{G72}
	with \begin{eqnarray}
	&&
	\widetilde{\lambda}^k:={\lambda}^k-\tau^k\beta^k	(A_1\widetilde{x}_1^k+\sum_{j=2}^{m}A_j\bar{x}_j^k-b).
	\end{eqnarray}
	Let	$v^k=\begin{pmatrix}
	A_2\bar{x}_2^k,&\cdots,&A_m\bar{x}_m^k,&\lambda^k
	\end{pmatrix}$. Then the statements \textnormal{(\rmnum{1})} and \textnormal{(\rmnum{2})} in Theorem \ref{T4.6} hold for Algorithm \ref{G117}.
\end{theorem}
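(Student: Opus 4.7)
The proof plan is to mirror the argument used for Theorem \ref{T4.6}, adapting it to the two structural differences in Algorithm \ref{G117}: the shifted multiplier $\hat{\lambda}^k=\lambda^k+\gamma(1-\tau^k)\beta^k(A\breve{x}^{k-1}-b)$ is used inside each $x_j$-subproblem in place of $\lambda^k$, and the multiplier is corrected only once at the end of the iteration via $\lambda^{k+1}=\lambda^k-\gamma\tau^k\beta^k(A\widetilde{x}^k-b)$. Since the matrices $Q^k,M^k,H^k,G^k$ defined in \eqref{G73} are identical to those used for Algorithm \ref{G63}, conditions \eqref{V9}-\eqref{V10} hold automatically, and the bulk of the work is to re-derive the prediction step and verify the correction step; from there, \eqref{V12} and the non-ergodic rates follow by Lemmas \ref{L5.1} and \ref{L5.3}.

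First I would write the optimality condition for each sequentially solved subproblem $\breve{x}_j^k$, expanding $\nabla_{x_j}\varphi^k(\cdots,\hat{\lambda}^k)$ to separate the term $-A_j^T\lambda^k$, the penalty term $\beta^k A_j^T(\sum_{i<j}A_i\breve{x}_i^k+\sum_{i\geq j}A_i\hat{x}_i^k-b)$, and the extra contribution $-\gamma(1-\tau^k)\beta^k A_j^T(A\breve{x}^{k-1}-b)$ coming from the shift in $\hat{\lambda}^k$. Next, I would take the same convex combination of the $m$ VIs as for Algorithm \ref{G63} (multiplying the aggregated inequality by $1/\tau^k$ or $1/(\tau^k)^2$ and evaluating at $\breve{x}^{k-1}$ to form the telescoping $f$-differences), then use $\hat{x}^k=(1-\tau^k)\breve{x}^{k-1}+\tau^k\bar{x}^k$, $\widetilde{x}^k=\breve{x}^k/\tau^k-(1-\tau^k)\breve{x}^{k-1}/\tau^k$, and \eqref{G13} (resp.\ \eqref{G29}) to convert the penalty differences into the block-lower-triangular form encoded by $Q^k$, producing the cross term $c^k(A(x-\widetilde{x}^k))^T(A\breve{x}^{k-1}-b)$ with $c^k=(1-\gamma)(1-\tau^k)\beta^k$.

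The dual VI comes from the identity $\widetilde{\lambda}^k=\lambda^k-\tau^k\beta^k(A_1\widetilde{x}_1^k+\sum_{j=2}^m A_j\bar{x}_j^k-b)$, which yields $(\lambda-\widetilde{\lambda}^k)^T[(A\widetilde{x}^k-b)-\frac{1}{\tau^k\beta^k}(\lambda^k-\widetilde{\lambda}^k)]\geq 0$ together with residuals of the form $\sum_{j\geq 2}(\lambda-\widetilde{\lambda}^k)^T A_j(\bar{x}_j^k-\widetilde{x}_j^k)$, absorbed by the off-diagonal $-\widetilde{I}$ block of $Q^k$. Adding this to the combined primal VI gives \eqref{G27} (resp.\ \eqref{G54}). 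For the correction step, I would verify that with $M^k=(P^k)^{-T}N^k$ and block structure from \eqref{G73}, the update $\bar{x}^{k+1}=\bar{x}^k-\gamma\bar{J}(\bar{x}^k-\widetilde{x}^k)$ together with $\lambda^{k+1}=\lambda^k-\gamma\tau^k\beta^k(A\widetilde{x}^k-b)=\lambda^k-\gamma(\lambda^k-\widetilde{\lambda}^k)$ is exactly $v^{k+1}=v^k-M^k(v^k-\widetilde{v}^k)$ for the $v^k=(A_2\bar{x}_2^k,\ldots,A_m\bar{x}_m^k,\lambda^k)$ specified in the theorem; here the key distinction from Theorem \ref{T4.6} is that $v^k$ now carries the unshifted $\lambda^k$ instead of $\bar{\lambda}^k$.

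The main obstacle I anticipate is the bookkeeping in the primal combination: one must carefully trace how the shift $+\gamma(1-\tau^k)\beta^k A_j^T(A\breve{x}^{k-1}-b)$ coming from $\hat{\lambda}^k$, after aggregating the $m$ subproblem VIs, reproduces the same cross-term coefficient $c^k=(1-\gamma)(1-\tau^k)\beta^k$ that arose for Algorithm \ref{G63} from the \emph{two-step} correction of $\lambda^k$. A secondary subtlety, needed for statement \textnormal{(\rmnum{2})}, is that the condition \eqref{V12} requires absorbing the indefinite term $-c\beta^k\frac{\tau^k}{(\tau^{k-1})^2}\|A\breve{x}^{k-1}-b\|^2$ inside $\varTheta^{k+1}-\varTheta^k$; this is handled exactly as in the proof of Theorem \ref{T4.6}(ii) via identity \eqref{G68} and the gradient-Lipschitz hypothesis on $f_m$ encoded through $R=I_{n_m}$ and $z^k=\nabla f_m(\breve{x}_m^k)$, so once the prediction/correction steps are established the verification of \eqref{V12} is mechanical.
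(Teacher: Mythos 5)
Your plan follows the paper's proof of Theorem \ref{T4.7} essentially verbatim: re-derive the prediction step from the subproblem optimality conditions exactly as in \eqref{G82} and \eqref{G119} (with the shift in $\hat{\lambda}^k$ playing the role that the first correction $\bar{\lambda}^k$ played for Algorithm \ref{G63}), verify the correction step from the single $\lambda$-update, and then reuse the verification of \eqref{V9}--\eqref{V10} and \eqref{V12} from Theorem \ref{T4.6} since $Q^k,M^k,H^k,G^k$ are unchanged. One slip to fix: the identity you assert in the correction step, $\lambda^k-\gamma\tau^k\beta^k(A\widetilde{x}^{k}-b)=\lambda^k-\gamma(\lambda^k-\widetilde{\lambda}^k)$, is false in the multi-block setting, because here $\widetilde{\lambda}^k=\lambda^k-\tau^k\beta^k(A_1\widetilde{x}_1^k+\sum_{j=2}^{m}A_j\bar{x}_j^k-b)$ is built from $\bar{x}_j^k$ rather than $\widetilde{x}_j^k$ in blocks $2,\dots,m$, so $\lambda^k-\widetilde{\lambda}^k$ and $\tau^k\beta^k(A\widetilde{x}^k-b)$ differ by $\tau^k\beta^k\sum_{i=2}^{m}A_i(\widetilde{x}_i^k-\bar{x}_i^k)$. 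The correct decomposition, which is what the paper writes, is
\begin{equation}\nonumber
\lambda^{k+1}=\lambda^k-\gamma\Big(\tau^k\beta^k\sum_{i=2}^{m}A_i(\widetilde{x}_i^k-\bar{x}_i^k)+(\lambda^k-\widetilde{\lambda}^k)\Big),
\end{equation}
and this extra sum is precisely what the off-diagonal block $-\gamma\tau^k\beta^k\widetilde{I}$ of $M^k=(P^k)^{-T}N^k$ supplies, so the conclusion $v^{k+1}=v^k-M^k(v^k-\widetilde{v}^k)$ still holds; only your intermediate equality needs to be replaced by this one.
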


\appendix
\section{Proofs in Section \ref{S5}}
\begin{proof}({\bfseries Proof of Theorem \ref{T1}})
	\noindent\textnormal{(\rmnum{1})}
	We first write the optimality condition of the $x_1$-subproblem in Algorithm \ref{G40} as
	\begin{equation}\label{G41}
	\begin{aligned}
	&f_1(x_1)-f_1(x^{k+1}_1)+(x_1-x_1^{k+1})^T[
	-A_1^T\lambda^k\\&+\beta^kA_1^T(A_1{x}_1^{k+1}+A_2\hat{x}_2^k-b)]\geq0,~\forall x_1.	
	\end{aligned}
	\end{equation}
	Since it holds that
	\begin{equation}\label{G70}
	\begin{aligned}
	&	-A_1^T\lambda^k+\beta^kA_1^T(A_1{x}_1^{k+1}+A_2\hat{x}_2^k-b)\\=&-A_1^T\lambda^k+\gamma (1-\tau^k)\beta^kA_1^T(Ax^k-b) +\tau^k\beta^kA_1^T(A_1\bar{x}_1^{k+1}\\&+A_2\bar{x}_2^k-b)+(1-\gamma)(1-\tau^k)\beta^kA_1^T(Ax^k-b)\\=&
	-A_1^T\widetilde{\lambda}^k+(1-\gamma)(1-\tau^k)\beta^kA_1^T(A\breve{x}^{k-1}-b),
	\end{aligned}
	\end{equation}
	multiplying both sides of \eqref{G41} by $(1-\tau^k)/\tau^k$ with  $x=\breve{x}^{k-1}$, and then adding it to   \eqref{G41} yields that
	\begin{equation}\label{G42}
	\begin{aligned}
	&\frac{1}{\tau^k}[f_1(x_1)-f_1(\breve{x}_1^{k})]-\frac{1}{\tau^{k-1}}[f_1(x_1)-f_1(\breve{x}_1^{k-1})]\\
	+&(x_1-\widetilde{x}_1^{k})^T\{-A_1^T\widetilde{\lambda}^k+(1-\gamma)(1-\tau^k)\beta^kA_1^T(A\breve{x}^{k-1}-b)\}\geq0,~\forall x_1.
	\end{aligned}
	\end{equation}
	Let $f_2$ be $\sigma$-strongly convex with $\sigma\ge 0$. 	
	The optimality condition of the $x_2$-subproblem in Algorithm \ref{G40} can be written as
	\begin{equation}\label{G91}
	\begin{aligned}
	&f_2(x_2)-f_2(x^{k+1}_2)+(x_2-x_2^{k+1})^T[-A_2^T\lambda^k+\beta^kA_2^T(A_1{x}_1^{k+1}+A_2\hat{x}_2^k-b)\\&+\beta^kD(x_2^{k+1}-\hat{x}_2^k)+\frac{\sigma(1-\tau^k)}{\sigma_{{\rm max}}(D)\tau^k}D(x_2^{k+1}-x_2^k)]\\\geq&\frac{\sigma}{2}\|x_2^{k+1}-x_2\|^2\geq\frac{\sigma}{2}\|x_2^{k+1}-x_2\|^2_{D/{\sigma_{\rm max}(D)  }}  ,~\forall x_2.	
	\end{aligned}
	\end{equation}
	Note that
	\begin{equation}\label{G71}
	\begin{aligned}
	&-A_2^T\lambda^k+\beta^kA_2^T(A_1{x}_1^{k+1}+A_2\hat{x}_2^k-b)+\beta^kD(x_2^{k+1}-\hat{x}_2^k)\\=&
	-A_2^T\lambda^k+\gamma (1-\tau^k)\beta^kA_2^T(Ax^k-b) +\tau^k\beta^kA_2^T(A_1\bar{x}_1^{k+1}+A_2\bar{x}_2^k-b)
	\\&+\tau^k\beta^kD(\bar{x}_2^{k+1}-\bar{x}_2^k)
	+(1-\gamma)(1-\tau^k)\beta^kA_2^T(Ax^k-b)\\=&
	-A_2^T\widetilde{\lambda}^k+\tau^k\beta^kD(\widetilde{x}_2^{k}-\bar{x}_2^k)
	+(1-\gamma)(1-\tau^k)\beta^kA_2^T(A\breve{x}^{k-1}-b).
	\end{aligned}
	\end{equation}
	Similar to getting \eqref{G42}, it follows from \eqref{G91} with  $\sigma=0$ that
	\begin{equation}\label{G44}
	\begin{aligned}
	&\frac{1}{\tau^k}[f_2(x_2)-f_2(\breve{x}_2^{k})]-\frac{1}{\tau^{k-1}}[f_2(x_2)-f_2(\breve{x}_2^{k-1})]
	+(x_2-\widetilde{x}_2^{k})^T[-A_2^T\widetilde{\lambda}^k\\&+\tau^k\beta^kD(\widetilde{x}_2^{k}-\bar{x}_2^k)
	+(1-\gamma)(1-\tau^k)\beta^kA_2^T(A\breve{x}^{k-1}-b)]\geq 0,
	~\forall x_2.
	\end{aligned}
	\end{equation}
	According to the definition $\widetilde{\lambda}^k$ \eqref{G94}, we have
	\begin{equation}\label{G47}
	(\lambda-\widetilde{\lambda}^k)^T[
	(A_1\widetilde{x}_1^k+A_2\widetilde{x}_2^k-b)-A_2(\widetilde{x}^k_2-\bar{x}^k_2)-\frac{1}{\tau^k\beta^k}( \bar{\lambda}^k- \widetilde{\lambda}^k)	
	]\geq0,~\forall \lambda.
	\end{equation}
	Based on the definitions of $u$, $v$ and $F(u)$ in \eqref{V18}, we can merge the inequalities \eqref{G42}, \eqref{G44} and \eqref{G47} into the following one:
	\begin{equation}\label{G48}
	\begin{aligned}
	&\frac{1}{\tau^k}[f(x)-f(\breve{x}^{k})]-\frac{1}{\tau^{k-1}}[f(x)-f(\breve{x}^{k-1})]+(u-\widetilde{u}^k)^TF(\widetilde{u}^k)\\&+(1-\gamma)(1-\tau^k)\beta^k(A(x-\widetilde{x}^k))^T(A\breve{x}^{k-1}-b)\\
	\geq&
	\begin{pmatrix}
	x_2-\widetilde{x}_2^k\\\lambda-\widetilde{\lambda}^k
	\end{pmatrix}^T\begin{pmatrix}
	\tau^k\beta^kD&0\\
	-A_2&\frac{1}{\tau^k\beta^k}I_l
	\end{pmatrix}
	\begin{pmatrix}
	\bar{x}_2^k-\widetilde{x}_2^k\\\bar{\lambda}^k-\widetilde{\lambda}^k
	\end{pmatrix},~\forall u.	
	\end{aligned}
	\end{equation}
	Then based on the definition of $\bar{x}^k$ in \eqref{G12}, we have
	\begin{equation}\label{G114}
	\begin{aligned}
	\lambda^{k+1}=&\lambda^k-\gamma[ (1-\tau^{k})\beta^k(Ax^{k}-b)-(1-\tau^{k+1})\beta^{k+1}(Ax^{k+1}-b) ]\\&-\gamma\tau^k\beta^k(A\bar{x}^{k+1}-b),~\gamma\in(0,1].
	\end{aligned}
	\end{equation}
	Hence, it holds that
	\begin{eqnarray*}
		\bar{\lambda}^{k+1}&\overset{\eqref{G45}}{=}&{\lambda}^{k+1}-\gamma(1-\tau^{k+1})\beta^{k+1}(A{x}^{k+1}-b)\\&\overset{\eqref{G114}}{=}&\bar{\lambda}^k-\gamma\tau^k\beta^k(A\bar{x}^{k+1}-b)=\bar{\lambda}^k-\gamma(\bar{\lambda}^k- \widetilde{\lambda}^k)-\gamma\tau^k\beta^kA_2(\widetilde{x}_2^{k}-\bar{x}_2^k) .
	\end{eqnarray*}
	Then  we have
	\begin{equation}\label{G51}
	\begin{pmatrix}
	\bar{x}_2^{k+1}\\\bar{\lambda}^{k+1}
	\end{pmatrix}=\begin{pmatrix}
	\bar{x}^{k}_2\\\bar{\lambda}^{k}
	\end{pmatrix}-\begin{pmatrix}
	I_{n_2}&0\\
	-	\gamma\tau^k\beta^kA_2&\gamma I_l
	\end{pmatrix}\begin{pmatrix}
	\bar{x}_2^{k}-\widetilde{x}_2^k\\\bar{\lambda}^k-\widetilde{\lambda}^k
	\end{pmatrix}.
	\end{equation}
	Clearly, \eqref{G48} and \eqref{G51} satisfy the prediction-correction framework \eqref{G27}-\eqref{G28} with $c^k=(1-\gamma)(1-\tau^k)\beta^k$.
	
	According to the definitions of $Q^k,~M^k,~H^k$ and $G^k$  given in \eqref{G50}, we can verify the conditions $\eqref{V9}$-$\eqref{V10}$.  For $D=A_2^TA_2$ or $\|A_2\|^2I_{n_2}$, we obtain
	\begin{equation}\label{G52}
	\begin{aligned}
	&\|v^k-\widetilde{v}^k\|^2_{G^k}=\tau^k\beta^k\|\bar{x}_2^k-\bar{x}_2^{k+1}\|^2_{D}+(2-\gamma)\tau^k\beta^k\|A\bar{x}^{k+1}-b\|^2\\&+2\tau^k\beta^k(\bar{x}_2^k-\bar{x}_2^{k+1})^TA_2^T(A\bar{x}^{k+1}-b)
	\geq(1-\gamma)\tau^k\beta^k\|A\widetilde{x}^k-b\|^2,
	\end{aligned}
	\end{equation}
	where the last relation follows from  Cauchy-Schwarz inequality. We have verified the convergence conditions $\eqref{V9}$-$\eqref{V10}$ and $\eqref{V12}$.
	
	\noindent\textnormal{(\rmnum{2})}
	According to \eqref{G71},
	multiplying both sides of \eqref{G91}
	by $(1-\tau^k)/(\tau^k)^2$ and $1/\tau^k$, respectively, fixing the former at  $x=\breve{x}^{k-1}$,  and  then adding both together yields that
		\begin{equation}\label{G92}
		\begin{aligned}
		&\frac{1}{(\tau^k)^2}[f_2(x_2)-f_2(\breve{x}_2^{k})]-\frac{1}{(\tau^{k-1})^2}[f_2(x_2)-f_2(\breve{x}_2^{k-1})]
		+\frac{1}{\tau^k}(x_2-\widetilde{x}_2^{k})^T\{-A_2^T\widetilde{\lambda}^k+\\&\tau^k\beta^kD(\widetilde{x}_2^{k}-\bar{x}_2^k)
		+(1-\gamma)(1-\tau^k)\beta^kA_2^T(A\breve{x}^{k-1}-b)
		+\frac{\sigma(1-\tau^k)}{\sigma_{{\rm max}}(D)\tau^k}D(x_2^{k+1}-x_2^k)\}\\&\geq
		\frac{\sigma}{2}\Big(
		\frac{1-\tau^k}{(\tau^k)^2}\|\breve{x}_2^{k}-\breve{x}_2^{k-1}\|^2_{D/{\sigma_{\rm max}(D)  }}  +\frac{1}{\tau^k}\|\breve{x}_2^{k}-x_2\|^2_{D/{\sigma_{\rm max}(D)  }}
		\Big),
		~\forall x_2.
		\end{aligned}
		\end{equation}
	Similar to getting \eqref{G88}, we can show that \eqref{G92} is equivalent to
	\begin{equation}\label{G93}
	\begin{aligned}
	&\frac{1}{(\tau^k)^2}[f_2(x_2)-f_2(\breve{x}_2^{k})]-\frac{1}{(\tau^{k-1})^2}[f_2(x_2)-f_2(\breve{x}_2^{k-1})]
	+\frac{1}{\tau^k}(x_2-\widetilde{x}_2^{k})^T[-A_2^T\widetilde{\lambda}^k\\&+\tau^k\beta^kD(\widetilde{x}_2^{k}-\bar{x}_2^k)
	+(1-\gamma)(1-\tau^k)\beta^kA_2^T(A\breve{x}^{k-1}-b)
	]\\&
	\geq
	\frac{\sigma}{2}\Big(\frac{1}{\tau^k}\|\widetilde{x}_2^k-x_2\|^2 _{D/{\sigma_{\rm max}(D)  }}+
	\frac{1-\tau^k}{(\tau^k)^3}\|\breve{x}_2^{k}-\breve{x}_2^{k-1}\|^2 _{D/{\sigma_{\rm max}(D)  }}\Big),~\forall x_2.
	\end{aligned}
	\end{equation}
	By combining the $x_1$-subproblem, \eqref{G93} and \eqref{G47},  we obtain
	\begin{equation}\label{G53}
	\begin{aligned}
	&\frac{1}{(\tau^k)^2}[f(x)-f(\breve{x}^{k})]-\frac{1}{(\tau^{k-1})^2}[f(x)-f(\breve{x}^{k-1})]+\frac{1}{\tau^k}(u-\widetilde{u}^k)^TF(\widetilde{u}^k)\\&+(1-\gamma)(1-\tau^k)\beta^k\frac{1}{\tau^k}(A(x-\widetilde{x}^k))^T(A\breve{x}^{k-1}-b)\\\geq&\frac{1}{\tau^k}
	(v-\widetilde{v}^k)^T\begin{pmatrix}
	\tau^k\beta^kD&0\\
	-A_2&\frac{1}{\tau^k\beta^k}I
	\end{pmatrix}(v^k-\widetilde{v}^k)+\frac{\sigma}{2\tau^k}\|\widetilde{x}_2^{k}-x_2\|^2_{D/{\sigma_{\rm max}(D)  }},~\forall u.
	\end{aligned}
	\end{equation}
	We can also  verify that the correction steps \eqref{G51} and \eqref{G52} hold.  Therefore, according to \eqref{G53} and \eqref{G51},  the prediction-correction framework \eqref{G54}-\eqref{G55} holds with $c^k=(1-\gamma)(1-\tau^k)\beta^k/\tau^k$, $R={D/{\sigma_{\rm max}(D)  }}$ and $z^k=\widetilde{x}^k_2$.
	
	According  to the condition \eqref{G56} and the structure of $H^k$, we have
	\begin{eqnarray*}
		&& \frac{1}{\tau^k}(\frac{\beta}{\tau^k}+\frac{ \sigma}{\sigma_{\rm max}(D)})D\succeq \frac{\beta}{(\tau^{k+1})^2}D
		\Longrightarrow
		\frac{1}{\tau^k}H^k+\frac{1}{\tau^k}\begin{pmatrix}
			\frac{\sigma}{\sigma_{\rm max}(D)  } D&0\\
			0&0
		\end{pmatrix}\succeq H_0^{k+1}.
	\end{eqnarray*}
	It	then holds that
	$$
	\begin{aligned}
	&\frac{1}{\tau^k}
	(v'-\widetilde{v}^k)^TQ^k(v^k-\widetilde{v}^k)+\frac{\sigma}{2\tau^k}\|\widetilde{x}_2^{k}-x_2'\|^2_{D/{\sigma_{\rm max}(D)  }}
	\\\geq
	&\frac{1}{2\tau^k}\Big(
	\|v^{k+1}-v'\|^2_{H^k}+\sigma\|\widetilde{x}_2^{k}-x_2'\|^2_{D/{\sigma_{\rm max}(D)  }}-\|v^k-v'\|^2_{H^k}+\|\widetilde{v}^k-v^k\|^2_{G^k}
	\Big)\\\geq&
	\frac{1}{2}\left(\|v^{k+1}-v'\|^2_{H^{k+1}_0}-\|v^k-v'\|^2_{H^k_0}+(1-\gamma)\beta^k\|A\widetilde{x}^k-b\|^2\right),
	\end{aligned}
	$$
	which completes	the proof. 	
\end{proof}
\begin{proof}({\bfseries Proof of Theorem \ref{T4.5}})
	Since $\lambda^0=0$,
	in Algorithm \ref{G40}, we always have
	\begin{equation}\nonumber
	\begin{aligned}
	\lambda^{k+1}=\lambda^k=\dots=\lambda^0=0,~\forall k.
	\end{aligned}
	\end{equation}
	The following proof is based on the optimality conditions.
\end{proof}

\begin{proof}	({\bfseries Proof of Theorem \ref{T3}})
	Based on the optimality conditions similar to \eqref{G48} and \eqref{G53}, we can verify the prediction step. Next, according to
	$$\lambda^{k+1}={\lambda}^k-\gamma\tau^k\beta^k(A\bar{x}^{k+1}-b)={\lambda}^k-\gamma({\lambda}^k- \widetilde{\lambda}^k)-\gamma\tau^k\beta^kA_2(\widetilde{x}_2^{k}-\bar{x}_2^k) ,$$ we can verify the correction step. The proof of verifying the conditions \eqref{V9}-\eqref{V10} and \eqref{V12} is similar to that of Theorem \ref{T1}.
\end{proof}

\begin{proof}
	({\bfseries Proof of Theorem \ref{T4.6}})
	
	\noindent	\textnormal{(\rmnum{1})}
	For any $x_j$ ($j=1,\cdots,m$), the optimality condition of the $x_j$-subproblem in Algorithm \ref{G63} reads as
	\begin{equation}\label{G64}
	\begin{aligned} &f_j(x_j)-f_j(\breve{x}_j^k)+(x_j-\breve{x}^k_j)^T(-A_j^T\lambda^k+\beta^kA_j^T(\sum_{i=1}^{j}A_i\breve{x}_i^k+\sum_{i=j+1}^{m}A_i\hat{x}_i^k-b))\geq 0.
	\end{aligned}
	\end{equation}
	Since it holds that
		\begin{equation}\label{G65}
		\begin{aligned}
		&\lambda^k-\beta^k(\sum_{i=1}^{j}A_i\breve{x}_i^k+\sum_{i=j+1}^{m}A_i\hat{x}_i^k-b)\\=&
		\lambda^k-\gamma (1-\tau^k)\beta^k(A\breve{x}^{k-1}-b) -\tau^k\beta^k(A_1\widetilde{x}_1^{k}+\sum_{i=2}^mA_i\bar{x}_i^{k}-b)\\
		&-\tau^k\beta^k\sum_{i=2}^{j}A_i(\widetilde{x}_i^k-\bar{x}_i^{k})-(1-\gamma)(1-\tau^k)\beta^k(A\breve{x}^{k-1}-b)
		\\=&\widetilde{\lambda}^k-\tau^k\beta^k\sum_{i=2}^{j}A_i(\widetilde{x}_i^k-\bar{x}_i^{k})-(1-\gamma)(1-\tau^k)\beta^k(A\breve{x}^{k-1}-b),
		\end{aligned}
		\end{equation}
	multiplying both sides of \eqref{G64} by $(1-\tau^k)/\tau^k$  at  $x=\breve{x}^{k-1}$ and then adding it to \eqref{G64} yields that 
	\begin{equation}\label{G78}
	\begin{aligned}
	&	\frac{1}{\tau^k}[f_j(x_j)-f_j(\breve{x}_j^{k})]-\frac{1}{\tau^{k-1}}[f_j(x_j)-f_j(\breve{x}_j^{k-1})]+(x_j-\widetilde{x}^k_j)^T
	[-A_j^T\widetilde{\lambda}^k\\+&\tau^k\beta^k\sum_{i=2}^{j}A_j^TA_i(\widetilde{x}_i^k-\bar{x}_i^{k})+(1-\gamma)(1-\tau^k)\beta^kA_j^T(A\breve{x}^{k-1}-b)]\geq 0,~\forall x_j.
	\end{aligned}
	\end{equation}
	By the definition of $\widetilde{\lambda}^k$, we have
	\begin{equation}\label{G66}
	(\lambda-\widetilde{\lambda}^k)^T[
	(\sum_{i=1}^{m}A_i\widetilde{x}_i^k-b)-\sum_{i=2}^{m}A_i(\widetilde{x}_i^k-\bar{x}_i^k)-\frac{1}{\tau^k\beta^k}( \bar{\lambda}^k- \widetilde{\lambda}^k)	
	]\geq0,~\forall \lambda.
	\end{equation}
	Then the prediction step holds, since it follows	from  \eqref{G78} and \eqref{G66} that
	\begin{equation}\label{G82}
	\begin{aligned}
	&\frac{1}{\tau^k}[f(x)-f(\breve{x}^{k})]-
	\frac{1}{\tau^{k-1}}[f(x)-f(\breve{x}^{k-1})]+(u-\widetilde{u}^k)^TF(\widetilde{u}^k)
	\\&+c_1^k(A(x-\widetilde{x}^k))^T(A\breve{x}^{k-1}-b)\geq
	(v-\widetilde{v}^k)^TQ^k(v^k-\widetilde{v}^k), ~\forall u,
	\end{aligned}
	\end{equation}
	where $c_1^k=(1-\gamma)(1-\tau^k)\beta^k$ and $Q^k$ is defined in \eqref{G73}.

	Then according to the definitions of $\bar{\lambda}^k$ in \eqref{G95} and $\widetilde{\lambda}^k$ in \eqref{G104}, we have
	\begin{eqnarray} \bar{\lambda}^{k+1}&\overset{\eqref{G95}}{=}&
	{\lambda}^{k+1}-\gamma (1-\tau^{k+1})\beta^{k+1}(A\breve{x}^{k}-b)\nonumber\\			&\overset{{\rm Algorithm}~ \ref{G63}}{=}&\bar{\lambda}^k-\gamma\tau^k\beta^k(A\widetilde{x}^{k}-b)\label{G96}
	\\&=&\bar{\lambda}^k-\gamma\tau^k\beta^k\left(\sum_{i=2}^{m}A_i(\widetilde{x}_i^k-\bar{x}_i^k)+A_1\widetilde{x}_1^k+\sum_{i=2}^{m}A_i\bar{x}_i^k-b\right)\nonumber\\&=&\bar{\lambda}^k-\gamma\left(\tau^k\beta^k\sum_{i=2}^{m}A_i(\widetilde{x}_i^k-\bar{x}_i^k)+(\bar{\lambda}^k-\widetilde{\lambda}^k)\right).\label{G102}
	\end{eqnarray}		
	Since $\bar{x}^{k+1}=\bar{x}^k-\gamma \bar{J}(\bar{x}^k-\widetilde{x}^k)$, it follows from \eqref{G102} that $v^{k+1}=v^k-M^k(v^k-\widetilde{v}^k)$, where $M^k=(P^k)^{-T}N^k.$ Hence, the correction step holds.
	
	We write $H^k$ and $G^k$ satisfying the convergence conditions \eqref{V9}-\eqref{V10} as
	\begin{equation}
	\begin{aligned}
	H^k&=Q^k(N^k)^{-1}(P^k)^T=\frac{1}{\gamma}P^k(P^k)^T=
	\frac{1}{\gamma}\begin{pmatrix}
	\tau^k\beta^kJJ^T&0\\
	0&\frac{1}{\tau^k\beta^k}I_l
	\end{pmatrix},\\
	G^k&=(Q^k)^T+Q^k-(M^k)^TH^kM^k
	\overset{(\ast)}{\succeq} (\frac{1}{\gamma^2}-\frac{1}{\gamma})(N^k)^TN^k,
	\end{aligned}
	\end{equation}	
	where ($\ast$) follows from
	\begin{equation}
	\nonumber
	\begin{aligned}
	&(Q^k)^T+Q^k=\begin{pmatrix}
	\beta^k(J^T+J)&-\widetilde{I}^T\\
	-\widetilde{I}&\frac{2}{\beta^k}I_l
	\end{pmatrix}
	\succeq \begin{pmatrix}
	\beta^k(J^T+J)&-\widetilde{I}^T\\
	-\widetilde{I}&\frac{1}{\beta^k}I_l
	\end{pmatrix}=\frac{1}{\gamma^2}(N^k)^TN^k,\\
	&(M^k)^TH^kM^k=\frac{1}{\gamma}(N^k)^TN^k.
	\end{aligned}
	\end{equation}				
	According to the definition of $N^k$, we have
	\begin{equation}\label{G99}
	\|v^k-\widetilde{v}^k\|^2_{G^k}\geq(1-\gamma)\tau^k\beta^k\|A\widetilde{x}^k-b\|^2.
	\end{equation}
	Based on $\tau^k\beta^k=\beta$, the structure of $H^k$ and \eqref{G99}, the condition \eqref{V12} holds.	
	
	\noindent	\textnormal{(\rmnum{2})}	
	According to the optimality condition of the $x_m$-subproblem in Algorithm \ref{G63} and the assumption that $f_m$ is $L$-gradient Lipschitz continuous, we have
	\begin{equation}
	\begin{aligned} &\frac{1}{(\tau^k)^2}[f_m(x_m)-f_m(\breve{x}_m^{k})]-\frac{1}{(\tau^{k-1})^2}[f_m(x_m)-f_m(\breve{x}_m^{k-1})]+\frac{1}{\tau^k}(x_m-\widetilde{x}_m^k)^T\\&\{-A_m^T\widetilde{\lambda}^k+\tau^k\beta^k\sum_{i=2}^{m}A_m^TA_i(\widetilde{x}_i^k-\bar{x}_i^{k})+(1-\gamma)(1-\tau^k)\beta^kA_m^T(A\breve{x}^{k-1}-b)\}\\&\geq
	\frac{1}{2L\tau^k}\|\nabla f_m(\breve{x}_m^k)-\nabla f_m({x}_m)\|^2+\frac{1-\tau^k}{2L(\tau^k)^2}\|\nabla f_m(\breve{x}_m^k)-\nabla f_m(\breve{x}^{k-1}_m)\|^2\\&\geq
	\frac{1}{2L\tau^k}\|\nabla f_m(\breve{x}_m^k)-\nabla f_m({x}_m)\|^2
	,~\forall x_m.
	\end{aligned}
	\end{equation}
	By an approach  similar to getting \eqref{G82},  we have
	\begin{eqnarray}
	&&\frac{1}{(\tau^k)^2}[f(x)-f(\breve{x}^{k})]-
	\frac{1}{(\tau^{k-1})^2}[f(x)-f(\breve{x}^{k-1})]\nonumber\\
	&&~~+\frac{1}{\tau^k}(u-\widetilde{u}^k)^TF(\widetilde{u}^k)
	+c_2^k(A(x-\widetilde{x}^k))^T(A\breve{x}^{k-1}-b)\label{G119}\\
	\geq&&\frac{1}{\tau^k}
	\left((v-\widetilde{v}^k)^TQ^k(v^k-\widetilde{v}^k)
	+	\frac{1}{2L}\|\nabla f_m(\breve{x}_m^k)-\nabla f_m({x}_m)\|^2\right), ~\forall u,\nonumber
	\end{eqnarray}
	where $c_2^k= (1-\gamma)(1-\tau^k)\beta^k/\tau^k $ and  $Q^k$ is defined in \eqref{G73}. Hence, the prediction step holds.
	It follows from \eqref{G102} and the update of $\bar{x}^{k+1}$ that $v^{k+1}=v^k-M^k(v^k-\widetilde{v}^k)$.  Hence, the correction step holds.
	
	The optimality condition of the $x_m$-subproblem in Algorithm \ref{G63} also implies that
	\begin{equation}\label{G75}
	\begin{aligned}\nabla f_m(\breve{x}_m^k)&
	=A^T_m\lambda^k-\beta^kA^T_m(A\breve{x}^k-b  )\\&
	=A^T_m\bar{\lambda}^k-\tau^k\beta^kA_m^T(A\widetilde{x}^k-b)
	-c_2^k\tau^kA_m^T(A\breve{x}^{k-1}-b)\\
	&\overset{\eqref{G96}}{=}A^T_m
	\left((1-\frac{1}{\gamma})\bar{\lambda}^k+\frac{1}{\gamma}\bar{\lambda}^{k+1}
	-c_2^k\tau^k(A\breve{x}^{k-1}-b)\right).
	\end{aligned}
	\end{equation}
	Since $0= \nabla f_m(x_m^*)-A_m^T\lambda^*$, it follows from \eqref{G75} that
	\begin{equation}\label{G83}
	\begin{aligned}
	&\frac{1}{L}\|\nabla f_m(\breve{x}_m^k)-\nabla f_m({x}^*_m)\|^2\\
	=&\frac{1}{L}\left\|A_m^T\left(\Big(1-\frac{1}{\gamma}\Big)\bar{\lambda}^k+\frac{1}{\gamma}\bar{\lambda}^{k+1}	-\lambda^*-c_2^k\tau^k(A\breve{x}^{k-1}-b)\right)\right\|^2\\ \geq&\sigma'\left\|\left(1-\frac{1}{\gamma}\right)\bar{\lambda}^k+\frac{1}{\gamma}\bar{\lambda}^{k+1}
	-\lambda^*-c_2^k\tau^k(A\breve{x}^{k-1}-b)  \right\|^2\\ \geq&\left(\sigma'-\frac{\sigma'^2}{\sigma'+1}\right)
	\left\|\left(1-\frac{1}{\gamma}\right)\bar{\lambda}+\frac{1}{\gamma}\bar{\lambda}^{k+1}
	-\lambda^*\right\|^2
	-(c_2^k\tau^k)^2\|A\breve{x}^{k-1}-b \|^2,
	\end{aligned}
	\end{equation}
	where the last inequality holds since
	$$\begin{aligned}		\sigma'\|s-t\|^2&=\sigma'\|s\|^2+\sigma'\|t\|^2-2\sigma's^Tt\\
	&\geq\sigma'\|s\|^2+\sigma'\|t\|^2-(\sigma'+1)\|t\|^2-\frac{\sigma'^2}{\sigma'+1}\|s\|^2, ~\forall s,~t\in\mathbb{R}^{l}.
	\end{aligned}
	$$
	We can observe that
	\begin{equation}\label{G76}
	\begin{aligned}
	&\left\|\left(1-\frac{1}{\gamma}\right)\bar{\lambda}^k+\frac{1}{\gamma}\bar{\lambda}^{k+1}
	-\lambda^*\right\|^2\\
	=&
	\frac{1}{\gamma}\|\bar{\lambda}^{k+1}-\lambda^*\|^2-\left(\frac{1}{\gamma}-1\Big)\|\bar{\lambda}^k-\lambda^*\|^2+\frac{1}{\gamma}\Big(\frac{1}{\gamma}-1\right)\|\bar{\lambda}^{k+1}-\bar{\lambda}^k\|^2.
	\end{aligned}
	\end{equation}
	Let $\beta^k=\beta$. Combining  \eqref{G99}, \eqref{G83} and \eqref{G76} yields that
	\begin{small}
		\begin{equation}\nonumber
		\begin{aligned}
		&\frac{1}{\tau^k}\left((v^*-\widetilde{v}^k)^TQ^k(v^k-\widetilde{v}^k)
		+	\frac{1}{2L}\|\nabla f_m(\breve{x}_m^k)-\nabla f_m({x}_m^*)\|^2\right)\\ \geq
		&\frac{1}{2\tau^k}\left(\|v^{k+1}-v^*\|^2_{H^k}+\frac{1}{L}\|\nabla f_m(\breve{x}_m^k)-\nabla f_m({x}^*_m)\|^2-\|v^k-v^*\|^2_{H^k}+\|v^k-\widetilde{v}^k\|^2_{G^k}
		\right)
		\\\geq&\frac{1}{2\tau^k}\left(\|v^{k+1}-v^*\|^2_{H^k}+\frac{\sigma''}{\gamma}\|\bar{\lambda}^{k+1}-\lambda^*\|^2-\|v^k-v^*\|^2_{H^k}-\sigma''(\frac{1}{\gamma}-1)\|\bar{\lambda}^k-\lambda^*\|^2\right)\\&-\frac{(c_2^k\tau^k)^2}{2\tau^k}\|A\breve{x}^{k-1}-b \|^2+\frac{(1-\gamma)\beta}{2}\|A\widetilde{x}^k-b\|^2
		\\ \geq&\frac{1}{2}\left(
		\|v^{k+1}-v^*\|^2_{H_{0}^{k+1}}-\|v^k-v^*\|^2_{H_{0}^k}-(1-\gamma)\beta\left(\frac{\tau^k}{(\tau^{k-1})^2}\|A\breve{x}^{k-1}-b \|^2
		-\|A\widetilde{x}^k-b\|^2\right)\right),
		\end{aligned}
		\end{equation}
	\end{small}
	where the last inequality follows from the condition \eqref{G79} and the fact
	\begin{small}
		\begin{eqnarray*}
			\frac{(c_2^k\tau^k)^2}{\tau^k}=\frac{(1-\gamma)^2\beta^2(1-\tau^k)^2}{\tau^k}\overset{\{(1-\gamma)\beta\leq1\}}{\leq}(1-\gamma)\beta\frac{(\tau^k)^3}{(\tau^{k-1})^4}\leq(1-\gamma)\beta\frac{\tau^k}{(\tau^{k-1})^2}.
		\end{eqnarray*}
	\end{small}
	The proof is complete. \end{proof}

\begin{proof}({\bfseries Proof of Theorem  \ref{T4.8}})	
	Since $\beta^k=\beta/\tau^k$, we have  $\lambda^{k+1}=\lambda^k$ for all $k$ in Algorithm \ref{G63}.
	The following proof is based on the optimality conditions and hence omitted.
\end{proof}

\begin{proof}
	({\bfseries Proof of Theorem  \ref{T4.7}})	
	
	\noindent 	Based on the optimality conditions, similar to getting \eqref{G82} and \eqref{G119}, we can verify the prediction step. Next, according to
	\begin{eqnarray}
	\lambda^{k+1}&=&{\lambda}^k-\gamma\tau^k\beta^k(A\widetilde{x}^{k}-b)\\&=&{\lambda}^k-\gamma\tau^k\beta^k\left(\sum_{i=2}^{m}A_i(\widetilde{x}_i^k-\bar{x}_i^k)+A_1\widetilde{x}_1^k+\sum_{i=2}^{m}A_i\bar{x}_i^k-b\right)\nonumber\\&=&{\lambda}^k-\gamma\left(\tau^k\beta^k\sum_{i=2}^{m}A_i(\widetilde{x}_i^k-\bar{x}_i^k)+({\lambda}^k-\widetilde{\lambda}^k)\right),
	\end{eqnarray}
	we can verify the correction step. The remaining proof of verifying the conditions \eqref{V9}-\eqref{V10} and \eqref{V12} is similar to those given in the proof of Theorem \ref{T4.6}.
\end{proof}

%
%

\bibliographystyle{siamplain}
\bibliography{ref}

\begin{thebibliography}{10}

\bibitem{beck2009fast}
{\sc A.~Beck and M.~Teboulle}, {\em A fast iterative shrinkage-thresholding
  algorithm for linear inverse problems}, SIAM J. Imaging Sci., 2 (2009),
  pp.~183--202.

\bibitem{chambolle2011first}
{\sc A.~Chambolle and T.~Pock}, {\em A first-order primal-dual algorithm for
  convex problems with applications to imaging}, J. Math. Imaging Vis., 40
  (2011), pp.~120--145.

\bibitem{chambolle2016ergodic}
{\sc A.~Chambolle and T.~Pock}, {\em On the ergodic convergence rates of a
  first-order primal-dual algorithm}, Math. Program., 159 (2016), pp.~253--287.

\bibitem{eckstein1994some}
{\sc J.~Eckstein}, {\em Some saddle-function splitting methods for convex
  programming}, Optim. Method Softw., 4 (1994), pp.~75--83.

\bibitem{gabay1976dual}
{\sc D.~Gabay and B.~Mercier}, {\em A dual algorithm for the solution of
  nonlinear variational problems via finite element approximation}, Comput.
  Math. Appl., 2 (1976), pp.~17--40.

\bibitem{glowinski1975}
{\sc R.~Glowinski and A.~Marroco}, {\em Sur l'approximation, par
  {\'e}l{\'e}ments finis d'ordre un, et la r{\'e}solution, par
  p{\'e}nalisation-dualit{\'e} d'une classe de probl{\`e}mes de {D}irichlet non
  lin{\'e}aires}, Esaim-Math. Model. Num., 9 (1975), pp.~41--76.

\bibitem{2014fast}
{\sc T.~Goldstein, B.~O'Donoghue, S.~Setzer, and R.~Baraniuk}, {\em Fast
  alternating direction optimization methods}, SIAM J. Imaging Sci., 7 (2014),
  pp.~1588--1623.

\bibitem{he2018my}
{\sc B.~He}, {\em My 20 years research on alternating directions method of
  multipliers}, Oper. Res. Trans, 22 (2018), pp.~1--31.

\bibitem{he2014strictly}
{\sc B.~He, H.~Liu, Z.~Wang, and X.~Yuan}, {\em A strictly contractive
  {P}eaceman-{R}achford splitting method for convex programming}, SIAM J.
  Optim., 24 (2014), pp.~1011--1040.

\bibitem{he2016convergence}
{\sc B.~He, F.~Ma, and X.~Yuan}, {\em Convergence study on the symmetric
  version of {ADMM} with larger step sizes}, SIAM J. Imaging Sci., 9 (2016),
  pp.~1467--1501.

\bibitem{he2020optimally}
{\sc B.~He, F.~Ma, and X.~Yuan}, {\em Optimally linearizing the alternating
  direction method of multipliers for convex programming}, Comput. Optim.
  Appl., 75 (2020), pp.~361--388.

\bibitem{he2012alternating}
{\sc B.~He, M.~Tao, and X.~Yuan}, {\em Alternating direction method with
  {G}aussian back substitution for separable convex programming}, SIAM J.
  Optim., 22 (2012), pp.~313--340.

\bibitem{he2015splitting}
{\sc B.~He, M.~Tao, and X.~Yuan}, {\em A splitting method for separable convex
  programming}, IMA J. Numer. Anal., 35 (2015), pp.~394--426.

\bibitem{he2017convergence}
{\sc B.~He, M.~Tao, and X.~Yuan}, {\em Convergence rate analysis for the
  alternating direction method of multipliers with a substitution procedure for
  separable convex programming}, Math. Oper. Res., 42 (2017), pp.~662--691.

\bibitem{0On}
{\sc B.~He and X.~Yuan}, {\em On the acceleration of augmented {L}agrangian
  method for linearly constrained optimization}, Optimization online,  (2010).

\bibitem{he20121}
{\sc B.~He and X.~Yuan}, {\em On the ${O}(1/n)$ convergence rate of the
  {D}ouglas-{R}achford alternating direction method}, SIAM J. Numer. Anal., 50
  (2012), pp.~700--709.

\bibitem{2012On}
{\sc B.~He and X.~Yuan}, {\em On the ${O}(1/t)$ convergence rate of the
  alternating direction method}, SIAM J. Numer. Anal., 50 (2012), pp.~700--709.

\bibitem{hestenes1969multiplier}
{\sc M.~R. Hestenes}, {\em Multiplier and gradient methods}, J. Optim. Theory
  Appl., 4 (1969), pp.~303--320.

\bibitem{li2017convergence}
{\sc H.~Li, C.~Fang, and Z.~Lin}, {\em Convergence rates analysis of the
  quadratic penalty method and its applications to decentralized distributed
  optimization}, arXiv:1711.10802,  (2017).

\bibitem{monteiro2013iteration}
{\sc R.~D.~C. Monteiro and B.~F. Svaiter}, {\em Iteration-complexity of
  block-decomposition algorithms and the alternating direction method of
  multipliers}, SIAM J. Optim., 23 (2013), pp.~475--507.

\bibitem{nesterov1983method}
{\sc Y.~Nesterov}, {\em A method for solving the convex programming problem
  with convergence rate ${O}(1/k^{2})$}, Dokl. Akad. Nauk SSSR, 269 (1983),
  pp.~543--547.

\bibitem{ouyang2015accelerated}
{\sc Y.~Ouyang, Y.~Chen, G.~Lan, and E.~Pasiliao~Jr}, {\em An accelerated
  linearized alternating direction method of multipliers}, SIAM J. Imaging
  Sci., 8 (2015), pp.~644--681.

\bibitem{powell1969method}
{\sc M.~J.~D. Powell}, {\em A method for nonlinear constraints in minimization
  problems}, Optimization,  (1969), pp.~283--298.

\bibitem{rockafellar1976augmented}
{\sc R.~T. Rockafellar}, {\em Augmented {L}agrangians and applications of the
  proximal point algorithm in convex programming}, Math. Oper. Res., 1 (1976),
  pp.~97--116.

\bibitem{rockafellar1976monotone}
{\sc R.~T. Rockafellar}, {\em Monotone operators and the proximal point
  algorithm}, SIAM J. Control Optim, 14 (1976), pp.~877--898.

\bibitem{sabach2022faster}
{\sc S.~Sabach and M.~Teboulle}, {\em Faster {L}agrangian-based methods in
  convex optimization}, SIAM J. Optim., 32 (2022), pp.~204--227.

\bibitem{shefi2014rate}
{\sc R.~Shefi and M.~Teboulle}, {\em Rate of convergence analysis of
  decomposition methods based on the proximal method of multipliers for convex
  minimization}, SIAM J. Optim., 24 (2014), pp.~269--297.

\bibitem{2016An}
{\sc W.~Tian and X.~Yuan}, {\em An alternating direction method of multipliers
  with a worst-case ${O}(1/n^2)$ convergence rate}, Math. Comput., 88 (2019),
  pp.~21--56.

\bibitem{tran2019proximal}
{\sc Q.~Tran-Dinh}, {\em Proximal alternating penalty algorithms for nonsmooth
  constrained convex optimization}, Comput. Optim. Appl., 72 (2019), pp.~1--43.

\bibitem{train}
{\sc Q.~Tran-Dinh, O.~Fercoq, and V.~Cevher}, {\em A smooth primal-dual
  optimization framework for nonsmooth composite convex minimization}, SIAM J.
  Optim., 28 (2018), pp.~96--134.

\bibitem{2020NON}
{\sc Q.~Tran-Dinh and Y.~Zhu}, {\em Non-stationary first-order primal-dual
  algorithms with faster convergence rates}, SIAM J. Optim., 30 (2020),
  pp.~2866--2896.

\bibitem{tseng2010}
{\sc P.~Tseng}, {\em Approximation accuracy, gradient methods, and error bound
  for structured convex optimization}, Math. Program., 125 (2010),
  pp.~263--295.

\bibitem{valkonen2020inertial}
{\sc T.~Valkonen}, {\em Inertial, corrected, primal-dual proximal splitting},
  SIAM J. Optim., 30 (2020), pp.~1391--1420.

\bibitem{xu2017accelerated}
{\sc Y.~Xu}, {\em Accelerated first-order primal-dual proximal methods for
  linearly constrained composite convex programming}, SIAM J. Optim., 27
  (2017), pp.~1459--1484.

\bibitem{yang2013linearized}
{\sc J.~Yang and X.~Yuan}, {\em Linearized augmented {L}agrangian and
  alternating direction methods for nuclear norm minimization}, Math. Comput.,
  82 (2013), pp.~301--329.

\bibitem{zhang2022faster}
{\sc T.~Zhang, Y.~Xia, and S.~Li}, {\em Lagrangian-based methods in convex
  optimization: prediction-correction frameworks with ergodic convergence
  rates}, arXiv:2206.05088,  (2023).

\end{thebibliography}

\end{document}